\definecolor{violet}{rgb}{0.5,0,0.8}
\theoremstyle{plain}
\newtheorem{theorem}{Theorem}[section]
\theoremstyle{plain}
\newtheorem{corollary}[theorem]{Corollary}
\theoremstyle{plain}
\newtheorem{lemma}[theorem]{Lemma}
\theoremstyle{plain}
\newtheorem{proposition}[theorem]{Proposition}
\theoremstyle{plain}
\newtheorem{definition}[theorem]{Definition}
\theoremstyle{plain}
\theoremstyle{plain}
\newtheorem{remark}[theorem]{Remark}
\begin{document}
\title{Martingale representation on enlarged filtrations: the role of the accessible jump times}
\author{Antonella Calzolari \thanks{Dipartimento di Matematica - Universit\`a di Roma
``Tor Vergata'', via della Ricerca Scientifica 1, I 00133 Roma,
Italy }  \and Barbara Torti $^*$}
\maketitle
\begin{abstract}
We consider a filtration $\mathbb{G}$ obtained as enlargement of a filtration $\mathbb{F}$ by a filtration $\mathbb{H}$.~We assume that all $\mathbb{F}$-local martingales are represented by a martingale $M$ and all $\mathbb{H}$-local martingales are represented by a martingale $N$.~$M$ and $N$ are not necessarily quasi-left continuous processes and their jump times may overlap.~We first analyze the contribution of the accessible jump times of $M$ and $N$ to the Jacod's dimension of the space of the $\mathcal{H}^1(\mathbb{G})$-martingales.~Then we prove a new martingale representation theorem on $\mathbb{G}$.
\end{abstract}

\begin{keywords}
Predictable representations property;
enlargement of filtration;
marked point processes;
compensators.
\end{keywords}

\section{Introduction}
Martingale representation is a classical topic of stochastic analysis with relevant theoretical and practical applications.~As well known, the martingale representation property is crucial to prove existence and uniqueness of BSDE's solutions, to derive the stochastic filtering equation by means of the innovation approach and to compute options hedging strategies in financial markets.~It is interesting to study what happens concerning martingale representation when the available information grows and typically the class of local martingales changes.~In this paper we investigate this problem assuming that the "representable" martingales may have jump times with non trivial \textit{accessible component}.~As far as we know, this assumption is an important novelty of our analysis compared to most of the papers on this subject.~We are  motivated by the growing interest of the literature in modeling situations of the real world which may
present critical "announced" random times, that is, from a mathematical point of view, \textit{predictable random times} (see e.g.~Fontana and Smith
 \cite{fontana-smith17}).~A second point of novelty is that we allow the \textit{additional information} to be not completely exogenous w.r.t.~to the \textit{basic information}.~In fact, rarely in the literature the two kinds of information, basic and additional, are produced by processes which may jump at the same random times (see e.g.~Jiao and Li \cite{jiao-li15}, Jeanblanc and Song \cite{jean-song15} and Aksamit, Jeanblanc and Rutkowski \cite{ak-jean-rut-19}).~We stress also that  only a few authors 
  deal, as we do here, with martingale representation when the growth of information is due to a process possibly different from the occurrence process of a random time $\tau$ (see e.g.~Kchia and Protter \cite{kchia-prott-15},  El Karoui, Jeanblanc and Jiao \cite{el-ka-jean-jiao15}, Di Tella and Jeanblanc \cite{ditella-jean-21}, Calzolari and Torti \cite{caltor22} and Biagini, Mazzon and Oberpriller \cite{bia-maz-ober-23}).~Moreover, the additional information is always generated by the observation of a process, while here we model the extra information as the reference filtration of a martingale, not necessarily its natural one\footnote{for an example of a martingale enjoying the PRP w.r.t.~a filtration larger than the natural one refer to the notion of  \textit{Weak Brownian Filtration} in \cite{mans-yor}}.~Based on the features highlighted in the above discussion,  our work meets the need suggested by the recent literature of new martingale representations on extended filtrations (see e.g.~Bandini, Confortola and Di Tella \cite{ba-con-ditella-21}  and  Bandini, Calvia and Colaneri \cite{band-calvia-col22}).
\bigskip\\
The framework of our study is the following.~On a given probability space $(\Omega,\mathcal{F}, P)$ let $M=(M_t)_{t\in [0,T]}$
and $N=(N_t)_{t\in [0,T]}$ be two square-integrable martingales enjoying the \textit{strong predictable representation property (PRP)} w.r.t.~the
filtrations $\mathbb{F}=(\mathcal{F}_t)_{t\in [0,T]}$ and
$\mathbb{H}=(\mathcal{H}_t)_{t\in [0,T]}$, respectively.~This means that all $(P,\mathbb{F})$ ($(P,\mathbb{H})$)-local martingales can be represented, up to the initial condition, as a stochastic integral w.r.t.~$M$ (w.r.t.~$N$).~Let $\mathbb{G}$ be the smallest right-continuous filtration containing both $\mathbb{F}$ and $\mathbb{H}$.~If there exists an equivalent probability measure such
that $\mathbb{F}$ and $\mathbb{H}$ are independent
(\textit{equivalent decoupling measure}), then it has been proved that the $\mathbb{R}^3$-valued process $(M, N, [M,N])$
enjoys the $\mathbb{G}$-PRP under the \textit{martingale preserving measure}, $P^*$, which is the decoupling measure under which $M$ and $N$ are
$\mathbb{G}$-martingales (see
\cite{caltor15}).~In other words any $(P^*,\mathbb{G})$-local martingale can be written as a \textit{vector stochastic integral} driven by  $(M, N, [M,N])$.~The fundamental argument is that the
covariation process $[M,N]$ is a $(P^*,\mathbb{G})$-square-integrable martingale strongly orthogonal to
$M$ and $N$.~So the set $(M, N, [M,N])$ turns out to be a
\textit{$(P^*,\mathbb{G})$-basis of martingales} in the sense of Davis and Varaiya (see \cite{davis1}).~The role of $\mathbb{F}$ and
$\mathbb{H}$ is symmetric and we can look at $\mathbb{G}$ either as the \textit{enlargement of
$\mathbb{F}$ by $\mathbb{H}$} or viceversa.\bigskip\\
In this paper we face two different issues linked to the above result.\bigskip\\
Our first problem starts from the following remark.~The process $[M,N]$, when not trivial, is a \textit{pure jump process} whose jump times arise by the overlapping of the accessible components of the jump times of $M$ and $N$.~Therefore $[M,N]$ vanishes almost surely as soon as at least one of the two martingales has \textit{totally inaccessible} jump times only.~The non-triviality of $[M,N]$ reflects on the dimension of the $(P^*,\mathbb{G})$-martingale driving the representation.~We investigate the consequences of this fact on the Jacod's dimension of the \textit{martingale space $\mathcal{H}^1(P,\mathbb{G})$}, that is the minimal possible dimension of a vector-valued local martingale enjoying the $(P,\mathbb{G})$-PRP.~We show that the maximum value of $dim\big(\mathcal{H}^1(P,\mathbb{G})\big)$ only depends on the mutual singularity of the kernels associated with
the sharp brackets of $M$ and $N$ joint with the mutual singularity of the kernels associated with the sharp brackets of their accessible martingale parts.~Based on the mutual behavior of the sharp brackets, we provide a condition equivalent to get $dim\big(\mathcal{H}^1(P,\mathbb{G})\big)$ equal to one and a condition sufficient to get $dim\big(\mathcal{H}^1(P,\mathbb{G})\big)$ equal to two.~\bigskip\\
The second problem we deal with is the construction of a local martingale driving the $(P,\mathbb{G})$-martingale representation.~More precisely, taking the point of view of the enlargement of $\mathbb{F}$ by $\mathbb{H}$,
we compute explicitly a multidimensional local martingale enjoying the $(P,\mathbb{G})$-PRP.~Indeed $N$ is a special $(P,\mathbb{G})$-semi-martingale and we denote by $N^\prime$ its local martingale part.~Our main hypothesis is that $P$ is the \textit{minimal martingale measure} for $N^\prime$ w.r.t.~$P^*$ (see \cite{ans_str92}).~We show that this condition implies that any $(P,\mathbb{F})$-martingale is a $(P,\mathbb{G})$-martingale (\textit{immersion of $\mathbb{F}$ in $\mathbb{G}$ under $P$})
and coincides with this property when $[M,N]\equiv 0$.~Under our assumptions we prove that $(M,N^\prime,[M,N])$ is a $(P,\mathbb{G})$-local martingale representing all the others.\bigskip\\
  The addressed issues lead us to prove two results of stochastic analysis of general interest.~More precisely we write the explicit expression of the compensator of a jump process with accessible jump times only and
   we show the existence of two predictable sets whose $\omega$-sections are disjoint supports of the random measures generated by two predictable increasing processes with mutually singular associated kernels.\bigskip\\
 The paper is organized as follows.~Section 2 is devoted to preliminaries.~After introducing the notations and recalling the necessary
 definitions and statements, we present in separated subsections some original results of the theory of semi-martingales
 interesting by themselves.~Section 3 describes the specific setting of our work.~In Section 4 we prove the theorem about the dimension of $\mathcal{H}^1(P,\mathbb{G})$.~The result on martingale representation is the object of Section 5.~First we discuss the case of progressive enlargement by $\tau$ and then the general case.~Finally in Section 6
 we formulate conclusions and future perspectives.
\section{Preliminaries}
Throughout the paper we consider processes on the time interval $[0,T]$.~Given a standard filtered probability space $(\Omega, \mathcal{A}, \mathbb{A}=(\mathcal{A}_t)_{t\leq T}, R)$ we denote by  $\mathcal{M}(R,\mathbb{A})$ the set of $(R,\mathbb{A})$-martingales ($\mathcal{M}_0(R,\mathbb{A})$ when the initial value is null) and by $\mathcal{M}_{loc}(R,\mathbb{A})$ ($\mathcal{M}_{loc,0}(R,\mathbb{A})$) the set of $(R,\mathbb{A})$-local martingales.~Analogously $\mathcal{M}^2(R,\mathbb{A})$ ($\mathcal{M}^2_0(R,\mathbb{A})$) is the set of square integrable $(R,\mathbb{A})$-martingales and  $\mathcal{M}^2_{loc}(R,\mathbb{A})$ ($\mathcal{M}^2_{loc,0}(R,\mathbb{A})$) its localization.\bigskip\\
\indent If $Y$ and $Y^\prime$ are two  semi-martingales we denote by $[Y,Y^\prime]$ their \textit{quadratic covariation process} and, when $Y=Y^\prime$, we  simply write $[Y]$ in place of $[Y,Y]$.~Given two local martingales $Z$ and $Z^\prime$ we denote by  $\langle Z,Z^\prime\rangle^{R,\mathbb{A}}$ their \textit{sharp bracket process} and, when $Z=Z^\prime$, we simply write $\langle Z\rangle^{R,\mathbb{A}}$ in place of $\langle Z,Z\rangle^{R,\mathbb{A}}$ (for definitions and existence's conditions we refer to Chapter VII in \cite{del-me-b}).\bigskip\\
\indent Given a semi-martingale $Y$ we denote by \textit{$\mathbb{P}(Y,\mathbb{A})$ the set of the local martingale
measures for $Y$ on $(\Omega, \mathcal{A}_T)$ equivalent to
$R|_{\mathcal{A}_T}$}.\bigskip\\
\indent Following Definition 4.22, page 121, in  \cite{he-wang-yan92}, a random time $\eta$ is a \textit{jump time} of a
c\`adl\`ag process $X$ if $R(X_{\eta}\neq X_{\eta^-}, \eta<+\infty)=R(\eta<+\infty)$.~As usual we denote with $\Delta X_{\eta}$ the
random variable $X_{\eta}- X_{\eta^-}$ with the convention that $\Delta X_{\eta}=0$ on $\{\eta = +\infty\}$.~When, as in this paper, $X$ lives
on the time interval $[0,T]$, its jump times are random variables taking values in $[0,T]\vee\{+\infty\}$.\bigskip\\
\indent A stopping time $\eta$ satisfies
 \begin{equation}\label{eq-eta-dec}
 \eta=\eta^{dp}\wedge\eta^{dq}
 \end{equation}
where $\eta^{dp}$ and $\eta^{dq}$ are the \textit{accessible component} and the \textit{totally inaccessible component}, respectively (see e.g.~Theorem 3,
 page 104, in \cite{Prott}).~More precisely
there exist two disjoint events $A$ and $B$ such that  $P$-a.s.
$$A\cup B=(\eta<\infty)$$
 and
 \begin{equation}\label{eq-time-decomposition}
   \eta^{dp}=\eta\, \mathbb{I}_A + \infty\, \mathbb{I}_{A^c},\;\;\;\eta^{dq}=\eta\, \mathbb{I}_B + \infty\, \mathbb{I}_{B^c}.
 \end{equation}
Therefore
 $$\eta \mathbb{I}_{\eta<+\infty}=\eta^{dp}\, \mathbb{I}_A + \eta^{dq}\, \mathbb{I}_B.$$
It is worthwhile to recall that
 \begin{equation}\label{eq: accessible-r-t}
 [[\eta^{dp}]]\subset\cup_m [[\eta^{dp}_m]],
 \end{equation}
where $\{\eta^{dp}_m\}_{m\in\mathbb{N}}$ is a sequence of predictable stopping times.~Such a sequence is not unique and it may be chosen in such a way that the corresponding graphs are pairwise disjoint (see Theorem 3.31, page 95, in \cite{he-wang-yan92}).~As far as $\eta^{dq}$ is concerned, we recall that by definition
 \begin{equation*}
 R(\eta^{dq}=\sigma<+\infty)=0,
 \end{equation*}
 for any predictable stopping time, and therefore for any accessible stopping time, $\sigma$.\bigskip\\
\textit{From now on we call  enveloping sequence of $\eta^{dp}$ any sequence
of predictable stopping times satisfying (\ref{eq: accessible-r-t}) and with pairwise disjoint graphs}.
\begin{remark}\label{rem:total-inacess}
We note that any finite totally inaccessible random time cannot coincide with positive probability with an independent random time.
\end{remark}
 Let us state Yoeurp's result about orthogonal decomposition of a local martingale.
\begin{theorem}\label{thm-decomp-yoeurp}(Theorem 1-4 in \cite{yoeurp76})\\
If $Z\in\mathcal{M}_{loc,0}(R,\mathbb{A})$ then $Z$
can be uniquely decomposed as
\begin{equation}\label{eq-decomp-yoeurp}
   Z=Z^c+Z^{dp}+Z^{dq},
\end{equation}
where $Z^c$, $Z^{dp}$ and $Z^{dq}$ belong to $\mathcal{M}_{loc,0}(R,\mathbb{A})$ and $Z^c$ has continuous trajectories, $Z^{dp}$ has
accessible jump times only and is strongly orthogonal to any local
martingale with at most totally inaccessible jump times, $Z^{dq}$
has totally inaccessible jump times only and is strongly
orthogonal to any local martingale with at most accessible jump
times.
\end{theorem}
\noindent As usual we refer to  $Z^c$, $Z^{dp}$ and $Z^{dq}$ as the
\textit{continuous}, the \textit{accessible martingale part} and the
\textit{totally
inaccessible martingale part} of $Z$, respectively.~We recall that $Z^{dp}$ and $Z^{dq}$ are  purely discontinuous local martingales
(see Definition 4.11, page 40, in \cite{ja-sh03}).~Observe that to any jump time of $Z^{dp}$ ($Z^{dq}$) corresponds the accessible
(totally inaccessible) component
of a jump time of $Z$ and viceversa.\\
It is worthwhile to stress that the above decomposition depends
on the reference filtration.~In fact  the nature
 of a random time is linked to the choice of the filtration and in particular \textit{accessibility is preserved by enlarging the
 filtration and viceversa total inaccessibility is preserved under restriction of the
 filtration}.\bigskip\\
\indent The crucial notion of this paper is the \textit{strong predictable representation property} of a local martingale.~We refer to \cite{cha-stri94} for the definition of the \textit{vector
stochastic integral} and its relation with the \textit{componentwise
stochastic integral}.~We recall that the two notions coincide when the components of the driving local martingale are pairwise strongly orthogonal (see Theorem 3.1 in \cite{cha-stri94}).
\begin{definition}(Definition 13.1, page 362, in \cite{he-wang-yan92})\\
$\bold{Z}=(Z_1,...,Z_m)$ with $Z_i\in M_{loc}(R,\mathbb{A}), i=1,\ldots, m,$ enjoys the $(R,\mathbb{A})$-strong predictable representation property ($(R,\mathbb{A})$-PRP) if each $V\in M_{loc,0}(R,\mathbb{A})$ can be represented  as vector stochastic integral w.r.t.~$\bold{Z}$.
\end{definition}
\noindent Next theorem is known as \textit{II Fundamental Theorem of Asset Pricing}.
\begin{theorem}\label{thm-2assetpricing}(Theorem 13.9, page 366, in \cite{he-wang-yan92})\\
 Let $\mathcal{A}_0$ be trivial.~Then $\boldsymbol{Z}\in\mathcal{M}_{loc}(R,\mathbb{A})$ enjoys the $(R,\mathbb{A})$-PRP if and only if $\mathbb{P}(\boldsymbol{Z},\mathbb{A})$ is a singleton.
\end{theorem}
\noindent By the above theorem and the comment to Corollary 11.4, page 340, in \cite{jacod} it follows that
 $\bold{Z}=(Z_1,...,Z_m)$, with $Z_i\in \mathcal{M}^2_{loc}(R,\mathbb{A}), i=1,\ldots, m$, enjoys the $(R,\mathbb{A})$-PRP if and only for each $V\in \mathcal{M}^2(R,\mathbb{A})$ holds
\begin{align*}
V_t=V_0+\left(\xi^V\cdot \bold{Z}\right)_t,
\end{align*}
where $V_0$ is $\mathcal{A}_0$-measurable,
$\xi^V=(\xi^V_1,...,\xi^V_m)$ is an $m$-dimensional
predictable  process
 such that
\begin{equation}\label{eq-integrands}
E^R\left[\sum_{i,j}\int_0^T\xi^V_i(t)\xi^V_j(t)\,d[Z_i,Z_j]_t\right]<+\infty\end{equation}
and
 $\left(\xi^V\cdot \bold{Z}\right)_t$ denotes the vector stochastic
integral of $\xi^V$.\\
The isometry between $\mathcal{M}^2(R,\mathbb{A})$ and $L^2(\Omega,
 \mathcal{A}_T, R)$ implies that, if  $\bold{Z}=(Z_1,...,Z_m)$, with $Z_i\in \mathcal{M}^2_{loc}(R,\mathbb{A}), i=1,\ldots, m$, enjoys the $(R,\mathbb{A})$-PRP, then
 each $K\in L^2(\Omega, \mathcal{A}_T, R)$ satisfies
$
K=K_0+\left(\xi^K\cdot \bold{Z}\right)_T
$,
where
$K_0$ is $\mathcal{A}_0$-measurable and
$\xi^K$ is an $m$-dimensional
predictable  process
 such that the analogous of
(\ref{eq-integrands}) holds (see pages 27-28 in  \cite{jacod}).\bigskip\\
To conclude the \textit{PRP is invariant by change of probability measures} in the following sense.~For a probability measure $\tilde{R}$ equivalent to $R|_{\mathcal{A}_T}$, set
$$L_t:=\frac{d\tilde{R}}{dR}\Big|_{\mathcal{A}_t},\;\;t\leq T.$$
$L=(L_t)_{t\in[0,T]}$ is a positive $(R,\mathbb{A})$-martingale and the following result holds.
\begin{proposition}\label{lemma:invariance}(Lemma 2.5 in \cite{jean-song15})\\
Let $\boldsymbol{Z}=(Z_1,...,Z_m)$, with $Z_i\in\mathcal{M}_{loc}(R,\mathbb{A})$, enjoy the $(R,\mathbb{A})$-PRP.~Assume that for any component $Z^i$ of $\boldsymbol{Z}$ there exists $\langle L,Z^i\rangle^{R,\mathbb{A}}$.~Set
$$\tilde{Z}^i_t:=Z^i_t - \int_0^t\,\frac1{L_{s^{-}}}\,d\langle L,Z^i\rangle^{R,\mathbb{A}}_s.$$
Then $\boldsymbol{\tilde{Z}}$ enjoys the $(\tilde{R},\mathbb{A})$-PRP.
\end{proposition}
\indent Now we introduce the notion of \textit{Jacod's dimension of the set $\mathcal{H}^1(R,\mathbb{A})$}, where
$$\mathcal{H}^1(R,\mathbb{A}):=\{M\in \mathcal{M}(R,\mathbb{A}): E^R[sup_{t\in [0,T]}|M_t|]<+\infty\}.$$
\noindent Let $\boldsymbol{Z}$ be a vector local martingale.~We denote by $\mathcal{Z}^1(\boldsymbol{Z},R)$  the \textit{stable subspace in $\mathcal{H}^1(R,\mathbb{A})$ generated by $\boldsymbol{Z}$} (see Definition (4.4), page 114,  in \cite{jacod}).
\begin{definition}\label{def_dimension} (Definition (4.38), pages 130-131, in \cite{jacod})\\
$dim\big(\mathcal{H}^1(R,\mathbb{A})\big)$ is the minimal dimension of a 1-generator of $\mathcal{H}^1(R,\mathbb{A})$, where a 1-generator of $\mathcal{H}^1(R,\mathbb{A})$ is any vector local martingale $\boldsymbol{Z}$ such that $\mathcal{Z}^1(\boldsymbol{Z},R)=\mathcal{H}^1_0(R,\mathbb{A})$.
\end{definition}
\noindent By Theorem 13.4, page 363, in \cite{he-wang-yan92} it follows immediately that
\textit{$dim\big(\mathcal{H}^1(R,\mathbb{A})\big)$ is the minimal possible dimension of a vector-valued local martingale enjoying the $(R,\mathbb{A})$-PRP.}
\begin{proposition}\label{thm-dimension}(Proposition 2.10 in \cite{aksamit-fontana-19})\\
If $\tilde{R}$ is a probability measure on $(\Omega, \mathcal{A})$ equivalent to $R$, then $dim\big(\mathcal{H}^1(\tilde{R},\mathbb{A})\big)=dim\big(\mathcal{H}^1(R,\mathbb{A})\big)$.
\end{proposition}
 For the definition of \textit{basis} and \textit{multiplicity} of a filtration we follow \cite{davis1}.
 \begin{definition}\label{def:basis}
An $(R,\mathbb{A})$-basis is a (at most countable) subset $\{Z_1, Z_2, \ldots\}$ of $\mathcal{M}^2(R,\mathbb{A})$ whose elements are pairwise strongly orthogonal and
 such that each $V\in \mathcal{M}^2(R,\mathbb{A})$ satisfies
\begin{align*}
V_t=V_0+\sum_{i}\int_0^t \Phi^V_i(s) d Z_i(s)
\end{align*}
where
$\Phi^V_i$ is a
predictable process such that $E^R\left[\int_0^T\,(\Phi^V_i)^2(s) d[Z_i]_s\right]<+\infty$ and $V_0$ is $\mathcal{A}_0$-measurable.
\end{definition}
\begin{remark}\label{rem: stochastic integrals}
If  $\bold{Z}=(Z_1,...,Z_m)$ with $Z_i\in\mathcal{M}^2(R,\mathbb{A})$ enjoys the $(R,\mathbb{A})$-PRP and has pairwise strongly orthogonal components then $(Z_1,...,Z_m)$ is a $(R,\mathbb{A})$-basis.
 \end{remark}
\begin{definition}\label{def:multiplicity}
The multiplicity of the filtration $\mathbb{A}$ under the measure
$R$ is the smallest integer $k\in \mathbb{N}\cup\{+\infty\}$ such
that there exists an $(R,\mathbb{A})$-basis of dimension $k$.
\end{definition}
For stating our main result we will need the notion of \textit{minimal martingale measure}.
\begin{definition}\label{def-min-mart-meas} (Definition 1  in \cite{ans_str92})\\
Let $Y$ be a special semi-martingale with local martingale part, $Z$.~A measure $Q$ in $\mathbb{P}(Y,\mathbb{A})$ is
the minimal martingale measure for $Y$ under $R$ if
$Y$ is a $(Q,\mathbb{A})$-martingale,
$Q|\mathcal{A}_0=R|\mathcal{A}_0$ and any $V\in\mathcal{M}^2_{loc}(R,\mathbb{A})$ orthogonal to $Z$ belongs to $\mathcal{M}_{loc}(Q,\mathbb{A})$.
\end{definition}
  Finally we provide a result linking the regularity of a special semi-martingale to the regularity of the elements of its canonical decomposition.
  \begin{proposition}\label{prop-regularity}
  Let $Y$ be a square-integrable special semi-martingale and let $Z$ be its local martingale part.~Then $Z\in\mathcal{M}^2(R,\mathbb{A})$.
  \end{proposition}
  \begin{proof}
  The regularity of $Y$ implies that its quadratic variation process $[Y]$ is integrable (see Theorem 2, page 246, in \cite{Prott}) and by Theorem VII-55, page 245, in \cite{del-me-b} it follows that $[Z]$ is integrable.~Then the thesis follows applying Theorem 11.4.5, page 243, in \cite{coh-ell-15}.
  \end{proof}
\subsection{The sharp bracket of the accessible part of a martingale}
It is well-known that any adapted process of locally integrable variation admits a unique
dual predictable projection also called \textit{(predictable) compensator} (see Theorem VI-80, page 139, in \cite{del-me-b}).\bigskip\\
Here we explicitly provide the compensator of a pure jump process with accessible jump times only and
we use its expression to compute  the sharp bracket of the accessible part of any square-integrable martingale.
\begin{lemma}\label{prop-explicit-compensator}
Consider an $\mathbb{A}$-adapted pure jump process $U$ of integrable variation with accessible jump times only.~Then its
$(R,\mathbb{A})$-compensator $B^U$ admits the representation
\begin{align}\label{eq-repr-sharp-var-Z}
B^U_t=& \sum_{n\in \mathbb{N}}\,\sum_{m\in
\mathbb{N}}\,E^R\left[\Delta U_{\eta_{n,m}}\;
\mathbb{I}_{\eta_n=\eta_{n,m}}
\mid\mathcal{A}_{{\eta_{n,m}}^-}\right]\mathbb{I}_{\eta_{n,m}\le
t}
\end{align}
where  $\{\eta_n\}_{n\in \mathbb{N}}$ is the sequence of the jump
times of $U$ and, for each $n\in \mathbb{N}$, $\{\eta_{n,m}\}_{m\in\mathbb{ N}}$ is an enveloping sequence of $\eta_n$.
\end{lemma}
\begin{proof}
$U$ satisfies
\begin{align}\label{eq-repr-quadr-var-Z}
 U_t=\sum_{s\leq t}\,\Delta
U_s=\sum_{n\in
\mathbb{N}}\,\Delta U_{\eta_n}\;\mathbb{I}_{\eta_n\le t}=
\sum_{n\in \mathbb{N}}\,\sum_{m\in \mathbb{N}}\,\Delta
U_{\eta_{n,m}}\; \mathbb{I}_{\eta_n=\eta_{n,m}}
\mathbb{I}_{\eta_{n,m}\le t}.
\end{align}
Moreover
for any $\mathbb{A}$-predictable stopping time $\eta$ it has to
$$
\Delta B^U_\eta=E^R\left[\Delta
U_\eta\mid\mathcal{A}_{\eta^-}\right]
$$
(see Theorem VI-76, page 136, in \cite{del-me-b}).~Observe that the
right hand side of the above equality makes sense by the
integrability assumption on $U$.~We stress that the same assumption provides the
 integrability conditions necessary for the rest of the proof.\bigskip\\
Equality  (\ref{eq-repr-sharp-var-Z}) is proved once we show that
$$U_t-\sum_{n\in
\mathbb{N}}\,\sum_{m\in \mathbb{N}}\,E^R\left[\Delta
U_{\eta_{n,m}}\; \mathbb{I}_{\eta_n=\eta_{n,m}}
\mid\mathcal{A}_{{\eta_{n,m}}^-}\right]\;\mathbb{I}_{\eta_{n,m}\le
t},\;\;\;t\in[0,T]$$
is an $(R,\mathbb{A})$-martingale.~In fact, since the second
addend in the previous expression is an $\mathbb{A}$-predictable
process (see Lemma 22.3 ii), page 411, in \cite{kall97}), the statement follows by the
uniqueness of the compensator.
\bigskip\\
Using representation (\ref{eq-repr-quadr-var-Z}), for any $s \le
t$, we get
\begin{align*}
& E^R\left[U_t - \sum_{n\in \mathbb{N}}\,\sum_{m\in
\mathbb{N}}\,E^R\left[\Delta U_{\eta_{n,m}}\;
\mathbb{I}_{\eta_n=\eta_{n,m}}
\mid\mathcal{A}_{{\eta_{n,m}}^-}\right]\;\mathbb{I}_{\eta_{n,m}\le
t}\mid \mathcal{A}_s\right]=\\
&U_s- \sum_{n\in \mathbb{N}}\,\sum_{m\in
\mathbb{N}}\,E^R\left[\Delta U_{\eta_{n,m}}\;
\mathbb{I}_{\eta_n=\eta_{n,m}}
\mid\mathcal{A}_{{\eta_{n,m}}^-}\right]\;\mathbb{I}_{\eta_{n,m}\le
s}\;+\\
& E^R\left[ \sum_{n\in \mathbb{N}}\,\sum_{m\in
\mathbb{N}}\,\left(\Delta U_{\eta_{n,m}}\;
\mathbb{I}_{\eta_n=\eta_{n,m}} -E^R\left[\Delta U_{\eta_{n,m}}\;
\mathbb{I}_{\eta_n=\eta_{n,m}}
\mid\mathcal{A}_{{\eta_{n,m}}^-}\right]\right)\mathbb{I}_{s<\eta_{n,m}\leq
t}\mid\mathcal{A}_s\right].
\end{align*}
Last term in previous expression can be written as
\begin{align*}
\sum_{n\in \mathbb{N}}\,\sum_{m\in \mathbb{N}}\, E^R\Big[\Big(\Delta
U_{\eta_{n,m}}\mathbb{I}_{\eta_n=\eta_{n,m}}\; -E^R\left[\Delta
U_{\eta_{n,m}}\mathbb{I}_{\eta_n=\eta_{n,m}}\;
\mid\mathcal{A}_{{\eta_{n,m}}^-}\right]\Big)\mathbb{I}_{\eta_{n,m}\le
t}\mid\mathcal{A}_s\Big]\,\mathbb{I}_{s<\eta_{n,m}}
\end{align*}
Since
$\sigma(\eta_{n,m})\subset\mathcal{A}_{{\eta_{n,m}}^-}$ (see
Chapter III Theorem 3.4 point 1, page 80,  in \cite{he-wang-yan92}), then the general
term of last sum is null if and only if
\begin{align*}
&E^R\left[ \Delta
U_{\eta_{n,m}}\mathbb{I}_{\eta_n=\eta_{n,m}}\,\mathbb{I}_{\eta_{n,m}\le
t}\mid\mathcal{A}_s\right]\,\mathbb{I}_{s<\eta_{n,m}}=\\
=&E^R\Big[
E^R\left[\Delta U_{\eta_{n,m}}\
\mathbb{I}_{\eta_n=\eta_{n,m}}\,\mathbb{I}_{\eta_{n,m}\le
t}\mid\mathcal{A}_{{\eta_{n,m}}^-}\right]\mid\mathcal{A}_s\Big]\,\mathbb{I}_{s<\eta_{n,m}},
\end{align*}
that is if and only if for any set $A\in\mathcal{A}_s$
\begin{align*}
&\int_{A\cap(s<\eta_{n,m})}E^R\left[\Delta
U_{\eta_{n,m}}\mathbb{I}_{\eta_n=\eta_{n,m}}\,\mathbb{I}_{\eta_{n,m}\le
t}\mid\mathcal{A}_s\right]\,dR=\\
=&\int_{A\cap(s<\eta_{n,m})}E^R\Big[ E^R\left[\Delta
U_{\eta_{n,m}}\mathbb{I}_{\eta_n=\eta_{n,m}}\,\mathbb{I}_{\eta_{n,m}\le
t}\mid\mathcal{A}_{{\eta_{n,m}}^-}\right]\mid\mathcal{A}_s\Big]\,dR.
\end{align*}
Last equality is true.~In fact, $A\cap(s<\eta_{n,m})$ belongs either to $\mathcal{A}_s$ or to $\mathcal{A}_{{\eta_{n,m}}^-}$, so that both expressions coincide with $$\int_{A\cap(s<\eta_{n,m})}\Delta U_{\eta_{n,m}}\,\mathbb{I}_{\eta_n=\eta_{n,m}}\,\mathbb{I}_{\eta_{n,m}\le t}\,dR.$$
\end{proof}
\begin{remark}
The process $B^U$, although unique, has different representations according to the  choice of the enveloping sequences of the jump times of $U$.~Nevertheless the representation of $B^U$ as multivariate point process is unique and  is obtained from (\ref{eq-repr-sharp-var-Z}),
whichever family $\big\{\{\eta_{n,m}\}_{m\in\mathbb{ N}},\;n\in\mathbb{N}\big\}$ is chosen.~More precisely,
\begin{align}\label{eq-repr-sharp-multi}
B^U_t=& \sum_{n\in \mathbb{N}}\,\sum_{m\in
\mathbb{N}}\,E^R\left[\Delta U_{\widetilde{\eta}_{n,m}}\;
\mathbb{I}_{\eta_n=\widetilde{\eta}_{n,m}}
\mid\mathcal{A}_{{\widetilde{\eta}_{n,m}\;^-}}\right]\mathbb{I}_{{\widetilde{\eta}_{n,m}}\le
t},
\end{align}
where
$$
\widetilde{\eta}_{n,m}=\begin{cases}
\eta_{n,m} &  \text{ if }  E^R\left[\Delta U_{\eta_{n,m}}\;
\mathbb{I}_{\eta_n=\eta_{n,m}}
\mid\mathcal{A}_{{\eta_{n,m}}^-}\right]\neq 0;\\
+\infty   &  \text{ if }  E^R\left[\Delta U_{\eta_{n,m}}\;
\mathbb{I}_{\eta_n=\eta_{n,m}}
\mid\mathcal{A}_{{\eta_{n,m}}^-}\right]=0,
\end{cases}
$$
and the sequence of increasing stopping times and marks of $B^U$ is identified by reordering the set $\{\widetilde{\eta}_{n,m}\}_{n,m\in\mathbb{ N}}$.~Note that, fixed $n$, the set
$\{\widetilde{\eta}_{n,m}\}_{m\in\mathbb{ N}}$ is an enveloping sequence of $\eta_{n}$.~In particular, for any $m$, $\widetilde{\eta}_{n,m}$ is a predictable random time by Proposition 2.10, page 17, in \cite{ja-sh03}.
\end{remark}
\begin{proposition}\label{cor-explicit-compensator}
If $Z\in \mathcal{M}^2(R,\mathbb{A})$, then
\begin{align*}
\langle Z^{dp}\rangle^{R,\mathbb{A}}_t=& \sum_{n\in
\mathbb{N}}\,\sum_{m\in \mathbb{N}}\,E^R\left[(\Delta
Z_{\eta_{n,m}})^2\; \mathbb{I}_{\eta^{dp}_n=\eta_{n,m}}
\mid\mathcal{A}_{{\eta_{n,m}}^-}\right]\mathbb{I}_{\eta_{n,m}\le
t},
\end{align*}
where $Z^{dp}$ is the accessible martingale part of $Z$ in the
decomposition  (\ref{eq-decomp-yoeurp}),
$\{\eta^{dp}_n\}_{n\in \mathbb{N}}$ is the sequence of the jump
times of $Z^{dp}$ and, for each $n\in \mathbb{N}$,
$\{\eta_{n,m}\}_{m\in\mathbb{ N}}$ is an enveloping sequence of $\eta^{dp}_n$.
\end{proposition}
\begin{proof}
We recall that $\langle Z^{dp}\rangle^{R,\mathbb{A}}$ exists since $Z^{dp}\in \mathcal{M}^2(R,\mathbb{A})$ and it is the compensator of $[Z^{dp}]$.~Therefore it is enough to apply previous lemma to
$[Z^{dp}]$, which is the adapted purely discontinuous integrable increasing process with accessible jump times only which satisfies
\begin{equation*}
   [Z^{dp}]_t=\sum_{s\leq t}\,(\Delta
Z^{dp}_s)^2=\sum_{n\in
\mathbb{N}}\,\sum_{m\in \mathbb{N}}\,(\Delta
Z^{dp}_{\eta_{n,m}})^2\mathbb{I}_{\eta_{n,m}\le
t}=\sum_{n\in
\mathbb{N}}\,\sum_{m\in \mathbb{N}}\,(\Delta
Z_{\eta_{n,m}})^2\mathbb{I}_{\eta_{n,m}\le
t}.
\end{equation*}
\end{proof}
\subsection{The compensated occurrence process of a random time}\label{subsec-compensated op}
Let $\eta$ be a random time in $[0,+\infty]$ and consider the occurrence process of $\eta$ restricted to $[0,T]$, that is
the process  $$\mathbb{I}_{\eta\le
\cdot}:=(\mathbb{I}_{\eta\le t})_{t\in [0,T]}.$$
It is a pure jump bounded process with a unique jump time corresponding to
$$
\eta \;\mathbb{I}_{\eta\le T} +\infty\;\mathbb{I}_{\eta>T}.
$$
\textit{In the rest of the paper, for notational convenience, we will identify it with $\eta$}.\bigskip\\
Define
$$\mathcal{H}^{\eta}_t:=\sigma(\eta\wedge t).$$ Then $\mathbb{H}^{\eta}=(\mathcal{H}^{\eta}_t)_{t\in [0,T]}$ is the natural filtration
of $\mathbb{I}_{\eta\le \cdot}$, that is the smallest
filtration which makes $\eta$ a stopping time.~Let $A^{\eta,R,\mathbb{H}^{\eta}}$ be the
$(R,\mathbb{H}^{\eta})$-compensator of $\mathbb{I}_{\eta\le
\cdot}$ or simply the \textit{(natural) compensator of $\eta$} and let
$H^{\eta}=(H^{\eta}_t)_{t\in [0,T]}$ be the \textit{(naturally) compensated occurrence process of $\eta$}, that is
the $(R,\mathbb{H}^{\eta})$-martingale defined by
\begin{equation}\label{eq-P-default-martingale}
 H^{\eta}_t:=\mathbb{ I}_{\eta\le t}- A^{\eta,R,\mathbb{H}^{\eta}}_t.
\end{equation}
\begin{remark}\label{caratterizzazione dei tempi aleatori}
It is to stress that any random time $\eta$ in $[0,T]\cup \{+\infty\}$, according to its law $\mu^\eta$, is an
$\mathbb{H}^{\eta}$-totally inaccessible stopping time if and only if
$\mu^\eta$ restricted to $[0,T]$ is a diffusive measure and it is an $\mathbb{H}^{\eta}$-accessible
stopping time if and only if $\mu^\eta$ is atomic.~This is a trivial generalization of Theorem IV-107, page 241, in
\cite{del-me-a}, which only covers the case when $\eta$ is finite.~Therefore $\eta$, unless trivial, can never be $\mathbb{H}^{\eta}$-predictable,
so that the martingale $H^{\eta}$ cannot be identically null.
\end{remark}
According to  (\ref{eq-eta-dec}) we denote by $\eta^{dp}$ and $\eta^{dq}$  the $\mathbb{H}^{\eta}$-accessible and the $\mathbb{H}^{\eta}$-totally
inaccessible
component of $\eta$, respectively.~Next proposition highlights their different contribution in the expression of the compensator of $\eta$.
\begin{proposition}
The compensated
occurrence process of $\eta$ admits the representation
\begin{equation}\label{H-prime}
H^{\eta}_t=\mathbb{I}_{\{\eta\le t\}}-A^{dq,R,\mathbb{H}^{\eta}}_t-\sum_{m\in \mathbb{N}}R(\eta^{dp}=\eta^{dp}_m\mid\mathcal{H}^{\eta}_{{{\eta^{dp}_m}^-}})\mathbb{ I}_{\{\eta^{dp}_m\le t\}},
\end{equation}
where $A^{dq,R,\mathbb{H}^{\eta}}$ denotes the (continuous) $(R,\mathbb{H}^{\eta})$-compensator
of $\eta^{dq}$ and $(\eta^{dp}_m)_{m\in \mathbb{N}}$ is an enveloping sequence of $\eta^{dp}$.
\end{proposition}
\begin{proof}
We start by the simple equality
\begin{equation}\label{decomposition of occurrence process}
\mathbb{I}_{\eta\le t}=\mathbb{I}_{\eta=\eta^{dq}}\mathbb{I}_{\eta^{dq}\le t}+\mathbb{I}_{\eta=\eta^{dp}}\mathbb{I}_{\eta^{dp}\le t}.
\end{equation}
Observing that
$$\{\eta^{dq}\le t\}\subset \{\eta=\eta^{dq}\}, \ \ \{\eta^{dp}\le t\}\subset \{\eta=\eta^{dp}\},$$
we get
\begin{equation}\label{decomposizione H}
\mathbb{I}_{\eta\le t}=\mathbb{I}_{\eta^{dq}\le t}+\mathbb{I}_{\eta^{dp}\le t}.
\end{equation}
In analogy with the notation $A^{dq,R,\mathbb{H}^{\eta}}$, we indicate with $A^{dp,R,\mathbb{H}^{\eta}}$ the $(R,\mathbb{H}^{\eta})$-compensator
 of  $\eta^{dp}$.~Then by the uniqueness of the compensator of $\eta$ we derive
\begin{equation}\label{eq-dec-comp-H}
 A^{\eta,R,\mathbb{H}^{\eta}}=A^{dq,R,\mathbb{H}^{\eta}}+A^{dp,R,\mathbb{H}^{\eta}}.
\end{equation}
Therefore
\begin{equation}\label{decomposizione G H}
H^{\eta}=\left(\mathbb{I}_{\eta^{dq}\le \cdot}-A^{dq,R,\mathbb{H}^{\eta}}\right)+\left(\mathbb{I}_{\eta^{dp}\le \cdot}-A^{dp,R,\mathbb{H}^{\eta}}\right),
\end{equation}
that is
\begin{equation*}
H^{\eta}=\mathbb{I}_{\eta\le \cdot}-A^{dq,R,\mathbb{H}^{\eta}}-A^{dp,R,\mathbb{H}^{\eta}}.
\end{equation*}
Finally Lemma \ref{prop-explicit-compensator} yields
\begin{equation}\label{eq-pred-comp-tau}
A^{dp,R,\mathbb{H}^{\eta}}_t=\sum_{m\in \mathbb{N}}R(\eta^{dp}=\eta^{dp}_m\mid\mathcal{H}_{{{\eta^{dp}_m}^-}})\mathbb{ I}_{\{\eta^{dp}_m\le t\}}.
\end{equation}
\end{proof}
We stress that, since the two martingales at the right-hand side of (\ref{decomposizione G H}) are strongly orthogonal, by the uniqueness of Yoeurp's decomposition (\ref{eq-decomp-yoeurp}), the $\mathbb{H}$-totally inaccessible and the $\mathbb{H}$-accessible  martingale  parts of $H$ satisfy the equalities
$$H^{dq}_t=\mathbb{I}_{\eta^{dq}\le t}-A^{dq,R,\mathbb{H}^{\eta}}_t, \ \ \ \ H^{dp}_t=\mathbb{I}_{\eta^{dp}\le t}-A^{dp,R,\mathbb{H}^{\eta}}_t.$$
We conclude recalling a key result to be used in the rest of the paper.
\begin{theorem}\label{prop-H-prp} (Proposition 2 in \cite{chou-meyer-75})\\
$H$ enjoys the
 $(R,\mathbb{H}^{\eta})$-PRP.
\end{theorem}
\subsection{Mutual singularity of the kernels associated with two predictable increasing processes}
In this subsection we prove a result on the supports of the random measures generated by two increasing processes, which will be the core of the study of the Jacod's dimension of  the space of martingales on the enlarged filtration $\mathbb{G}$.\bigskip\\
Let us start with two definitions given in analogy
with those at pages 19 and 374 in \cite{Billy}, respectively.
\begin{definition}
A \textit{support of a measure} $\mu$ on a measurable space $(E,
\mathcal{E})$ is any set $C\in \mathcal{E}$ such that
$\mu(E\setminus C)=0$.
\end{definition}
\begin{definition}
Two measures $\mu$ and $\nu$ on the measurable space $(E,
\mathcal{E})$ are \textit{mutually singular} if they admit disjoint
supports, that is if there exist two measurable disjoint sets
$C^\mu$ and $C^\nu$ such that $\mu(E\setminus C^\mu)=0$ and
$\nu(E\setminus C^\nu)=0$.
\end{definition}
\begin{remark}
If $\mu$ and $\nu$ are mutually singular with disjoint supports  $C^\mu$ and $C^\nu$, then $\mu(C^\nu)=\nu(C^\mu)=0$.
\end{remark}
We call
\textit{random set} any subset of the product space $\Omega\times
[0,T]$,
 \textit{measurable random set}  any random set belonging
to the product sigma-algebra $\mathcal{ A}\otimes
\mathcal{B}([0,T])$ and \textit{predictable random set} any random
set belonging to the predictable sigma algebra $\mathcal{P}(\mathbb{A})$ on
$\Omega\times [0,T]$
(see pages 3 and 16 in \cite{ja-sh03}).~Finally given a random
set $C$ we denote by $\overline{C}$ its complementary set and
by $C^\omega$ its $\omega$-section, that is
\begin{equation}\label{eq-omega-section}
C^\omega:=\{t,\, (t,\omega)\in C\}.
\end{equation}
Moreover we also introduce
next definition (see pages 29 and 30 in \cite{kall-17}).
\begin{definition}\label{def-kernel}
A function $\kappa:\Omega\times \mathcal{B}([0,T])\rightarrow
\mathbb{R}^+$,
$(\omega, G)\rightarrow \kappa(\omega, G)$, is a kernel from
$\Omega$ to $[0,T]$ if and only if,  for any fixed $G\in
\mathcal{B}([0,T])$, $\omega\rightarrow\kappa(\omega, G)$ is
$\mathcal{A}$-measurable and, for any fixed $\omega$,
$G\rightarrow \kappa(\omega, G)$ is a measure on $\left([0,T],\mathcal{B}([0,T])\right)$.\\
Two kernels $\kappa^i$, $i=1,2$,  from $\Omega$ to $[0,T]$ are
mutually singular if there exist two disjoint  measurable random
sets $\Gamma_i$, $i=1,2$, with $\Gamma_i^\omega$ support of the measure $\kappa^i(\omega, \cdot)$, that is $\kappa^i(\omega,
\overline{\Gamma^\omega_i})=0$, $i=1,2$.
\end{definition}
\begin{remark}\label{rem-kernel-induced}
Observe that any increasing process $(V_t)_{t\in[0,T]}$
uniquely defines a kernel
$dV:\Omega\times \mathcal{B}([0,T])\rightarrow \mathbb{R}^+$,
$(\omega, G)\rightarrow \int_GdV_s(\omega)$  (for further details
see VI-86, page 145,  in \cite{del-me-b}).
\end{remark}
\begin{proposition}\label{lemma-disjoint-pred-supp}
Let $(A_t)_{t\in[0,T]}$ and  $(B_t)_{t\in[0,T]}$ be two
non-negative increasing predictable processes.~Assume that the associated
kernels $dA$ and $dB$ are  mutually singular.~Then there exists a
predictable random set $C^A$ such that, $R$-a.s., $C^{A,\omega}$
and $\overline{C^{A,\omega}}$ are disjoint supports
 of the measures $dA(\omega, \cdot)$ and $dB(\omega, \cdot)$, respectively.
\end{proposition}
\begin{proof}
By assumption there exist two disjoint measurable random sets
$\Gamma^A$ and $\Gamma^B$ such that, for any $\omega$,
$dA(\omega,\overline{\Gamma^{A,\omega}})=dB(\omega,\overline{\Gamma^{B,\omega}})=0$ and in particular
 \begin{equation}\label{eq-disjoint-support}
   dB(\omega,\Gamma^{A,\omega})=0.
\end{equation}
First, by means of a projection procedure, we prove that it is
possible to construct a predictable random set $C^A$ whose
$\omega$-sections are $R$-a.s.~supports of the measures $dA(\omega, \cdot)$.\\
Then, we prove that $R$-a.s.~the $\omega$-section   of the predictable
random set $\overline{C^{A}}$ is support of the
measure $dB(\omega, \cdot)$ and, since  $(\overline{C^{A}})^\omega=\overline{C^{A,\omega}}$, this ends the proof.~Let us be
more precise.\bigskip\\
We construct $C^A$ as  the \textit{predictable  support} of
the measurable random set  $\Gamma^A$, that is
\begin{equation}\label{eq-predictable-supp}
C^A:=\big\{(\omega,t), \ \
^{p\,}\mathbb{I}_{\Gamma^A}(\omega,t)>0\big\},
\end{equation}
where $^{p\,}\mathbb{I}_{\Gamma^A}$ denotes the predictable
projection of the process $\mathbb{I}_{\Gamma^A}$ (see Chapter 1
Definition 2.32, page 24, in \cite{ja-sh03}).~Now we prove
 that $R$-a.s.~the $\omega$-section  $C^{A,\omega}$ is a support of the
measure $dA_\cdot(\omega)$.\bigskip\\
 To this end it suffices to establish that
\begin{equation}\label{eq-newsupport}
\int_{\overline{{C}^{A,\omega}}}dA_s(\omega)=0,  \ \ R\text{-a.s.}
\end{equation}
 In fact $\int_{
\overline{{C}^{A,\omega}}}dA_s(\omega)\ge 0$ and moreover the
following equalities hold
\begin{equation}\label{eq-complementari}
E^R\left[\int_{
\overline{{C}^{A,\cdot}}}dA_s(\cdot)\right]=E^R\left[\int_{
\overline{{C}^{A,\cdot}}}\mathbb{I}_{{\Gamma^{A}}}(\cdot,s)dA_s(\cdot)\right]=E^R\left[\int_{[0,T]}\mathbb{I}_{
\overline{{C}^{A,\cdot}}}(s)\mathbb{I}_{{\Gamma^{A}}}(\cdot,s)dA_s(\cdot)\right].
\end{equation}
Observe that for any $(\omega, t)\in\Omega\times [0,T]$ it holds
\begin{equation*}
\mathbb{I}_{ \overline{{C}^{A,\omega}}}(t)=\mathbb{I}_{
\overline{{C}^{A}}}(\omega, t).
\end{equation*}
 Then the last term in (\ref{eq-complementari}) coincides with
\begin{equation}\label{eq-zeta}
    E^R\left[\int_{[0,T]}\zeta(\cdot,s)dA_s(\cdot)\right],
\end{equation}
where $\zeta(\omega,t):=\mathbb{I}_{ \overline{{C}^{A}}}(\omega,
t)\mathbb{I}_{{\Gamma^{A}}}(\omega,t)$.~Again,  due to the fact
that, by assumption, the process $A$ is predictable, the
expression in (\ref{eq-zeta})  turns to be equal to (see Theorem
VI.57, page 122,  in \cite{del-me-b})
\begin{align*}
&E^R\left[\int_{[0,T]}\, ^p\zeta(\cdot, s)dA_s(\cdot)\right],
\end{align*}
where, being $\overline{C^A}$ a predictable random set,
$$^p\zeta(\omega, t)=\mathbb{I}_{ \overline{{C}^{A}}}(\omega,
t)\,^{p\,}\mathbb{I}_{{\Gamma^{A}}}(\omega,t)$$
 (see Chapter 1 Theorem 2.28 c) in \cite{ja-sh03}).~Then
\begin{align*}
&E^R\left[\int_{[0,T]}\, ^p\zeta(\cdot,s)dA_s(\cdot)\right]=
E^R\left[\int_{ \left(\overline{{C}^{A}}\right)^\cdot}\,
^{p\,}\mathbb{I}_{{\Gamma^A}}(\cdot,s)dA_s(\cdot)\right].
\end{align*}
Since by
definition for any $\omega$ the function  the function $
^{p\,}\mathbb{I}_{{\Gamma^A}}(\omega,\cdot)$ is null on the set
$\overline{{C}^{A,\omega}}$, the right hand side in previous equality is equal to zero, so that
(\ref{eq-newsupport}) follows.\bigskip\\
Let us prove that $\overline{{C}^{A,\omega}}$ is a
support of $dB(\omega, \cdot)$.~This follows as soon as we show that  $R$-a.s.
 \begin{equation}\label{eq-disjoint}
\int_{{C}^{A,\omega}}dB_s(\omega)=0.
\end{equation}
 In fact
\begin{equation*}
   0= E^R\left[\int_{\Gamma^A}dB_s\right]=E^R\left[\int_{[0,T]}
    \mathbb{I}_{\Gamma^{A,\cdot}}(s)dB_s(\cdot)\right]=E^R\left[\int_{[0,T]}\,^{p\,}\mathbb{I}_{\Gamma^A}(\cdot,s)dB_s(\cdot)\right],
\end{equation*}
where last equality again follows by Theorem VI.57 in \cite{del-me-b}.~Therefore
$$
E^R\left[\int_{C^{A,\cdot}}\,^{p\,}\mathbb{I}_{\Gamma^A}(\cdot,s)dB_s(\cdot)\right]=0
$$
which is equivalent to
$\int_{C^{A,\omega}}\,^{p\,}\mathbb{I}_{\Gamma^A}(\omega,s)dB_s(\omega)=0$,
$R$-a.s.~Since by definition, fixed $\omega$, $^{p\,}\mathbb{I}_{\Gamma^A}(\omega,s)>0$ for all
$s\in C^{A,\omega}$,
then (\ref{eq-disjoint}) follows, $R$-a.s.
\end{proof}
 \section{The setting}
In the following we introduce the setting we will work in looking for a new martingale representation theorem.\bigskip\\
From now on we will consider a fixed probability space $(\Omega,\mathcal{F}, P)$ and  two standard
filtrations on it,
$\mathbb{F}=(\mathcal{F}_t)_{t\in [0,T]}$ and
$\mathbb{H}=(\mathcal{H}_t)_{t\in [0,T]}$, with trivial initial $\sigma$-algebras and such that $\mathcal{F}_T\subset\mathcal{F}$ and
$\mathcal{H}_T\subset\mathcal{F}$.~We will set
\begin{equation}\label{def-G}\mathbb{G}:=\mathbb{F}\vee\mathbb{H}.\end{equation}
\textit{From now on we will standing assume the following condition $\textrm{(D)}$.}\vspace{0.5em}\\
$\textrm{(D)}$  There exists an \textit{equivalent decoupling measure} that is a probability measure $Q$ on $(\Omega, \mathcal{G}_T)$ equivalent to $P|_{\mathcal{G}_T}$ under which $\mathcal{F}_T$ and $\mathcal{H}_T$ are independent.\bigskip\\
Then also $\mathbb{G}$ under $P$ is a standard filtration (see Lemma 2.2 in \cite
{ame-be-schw03}).\bigskip\\
Let $M=(M_t)_{t\in [0,T]}$ and
$N=(N_t)_{t\in [0,T]}$ be a square-integrable
 $(P,\mathbb{F})$-martingale and a square-integrable
 $(P,\mathbb{H})$-martingale respectively.
We define on $(\Omega,\mathcal{G}_T)$ the \textit{martingale preserving measure} measure $P^*$ associated to $Q$ by
\begin{equation}\label{eq-mgpreserving-decoupl-measure}
dP^*:=\frac{dP}{dQ}\Big|_{\mathcal{F}_T}\cdot\frac{dP}
 {dQ}\Big|_{\mathcal{H}_T}\,dQ.
 \end{equation}
Then $M$ and $N$ are independent $(P^*,\mathbb{G})$-martingales and each of them
 preserves  under $P^*$ its law since
 \begin{equation}\label{eq:P-marginals}P^*|_{\mathcal{F}_T}=P|_{\mathcal{F}_T}\;\;\;\; P^*|_{\mathcal{H}_T}=P|_{\mathcal{H}_T}.\end{equation}
In the light of Theorem \ref{thm-decomp-yoeurp} we consider the
$(P^*,\mathbb{G})$-decompositions of $M$ and  $N$
\begin{equation}\label{Yoeurp-decomposition}
M=M^{c}+M^{dp}+M^{dq},\;\;\;N=N^{c}+N^{ dp}+N^{ dq}.
\end{equation}
\begin{proposition}\label{rem-inaccessible-time-from-F-to_G}
All $\mathbb{F}$-totally
inaccessible stopping times and all $\mathbb{H}$-totally
inaccessible stopping times are also $\mathbb{G}$-totally
inaccessible.
\end{proposition}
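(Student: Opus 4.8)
The statement is symmetric in $\mathbb{F}$ and $\mathbb{H}$, so it suffices to prove it for an arbitrary $\mathbb{F}$-totally inaccessible stopping time $\sigma$. The plan is to exploit the decoupling measure $Q$ from condition \textbf{D)}: since total inaccessibility is invariant under an equivalent change of measure (as noted in the excerpt, the nature of a random time is preserved), it is enough to show that $\sigma$ is $\mathbb{G}$-totally inaccessible under $Q$, where $\mathbb{F}$ and $\mathbb{H}$ are independent. So from the outset I would replace $P$ by $Q$ and work with independent filtrations.

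First I would recall the characterization of total inaccessibility: $\sigma$ is $\mathbb{G}$-totally inaccessible iff $Q(\sigma = \rho < \infty) = 0$ for every $\mathbb{G}$-predictable stopping time $\rho$; equivalently, iff for every increasing sequence of $\mathbb{G}$-stopping times $\rho_n \uparrow \rho$ with $\rho_n < \rho$ on $\{\rho>0\}$ one has $Q(\sigma = \rho < \infty, \rho_n < \rho \text{ for all } n) = 0$. Since $\sigma$ is $\mathbb{F}$-totally inaccessible, it is in particular $\mathbb{F}$-totally inaccessible, hence avoids every $\mathbb{F}$-predictable time; what must be ruled out is that enlarging to $\mathbb{G} = \mathbb{F}\vee\mathbb{H}$ creates a new predictable time that $\sigma$ can hit. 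The key structural input is that a $\mathbb{G}$-predictable set, under the independence of $\mathbb{F}$ and $\mathbb{H}$, can be analyzed via a product/Fubini argument: the $\mathbb{G}$-predictable $\sigma$-field is generated by products of $\mathbb{F}$-predictable and $\mathbb{H}$-predictable sets.

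The cleanest route I would take is via compensators. Let $A$ be the $(Q,\mathbb{F})$-compensator of the single-jump process $\mathbf{1}_{\{\sigma \le t\}}$; total inaccessibility of $\sigma$ for $\mathbb{F}$ means $A$ is continuous. I would then argue that, because $\mathbb{H}$ is $Q$-independent of $\mathbb{F}$, the process $A$ is still the $(Q,\mathbb{G})$-compensator of $\mathbf{1}_{\{\sigma\le t\}}$: indeed $\mathbf{1}_{\{\sigma\le t\}} - A_t$ is an $\mathbb{F}$-martingale, and an $\mathbb{F}$-martingale remains a $\mathbb{G}$-martingale when $\mathbb{H}$ is independent of $\mathbb{F}$ (this is the immersion property guaranteed by independence — one checks it via $E_Q[\,\cdot\mid\mathcal{G}_s] = E_Q[\,\cdot\mid\mathcal{F}_s]$ using independence of $\mathcal{H}_T$ from $\mathcal{F}_T$). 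Since $A$ is continuous and $\mathbb{G}$-predictable (being $\mathbb{F}$-adapted continuous), the $(Q,\mathbb{G})$-compensator of $\sigma$ is continuous, which is exactly the statement that $\sigma$ is $\mathbb{G}$-totally inaccessible. Transferring back to $P$ by equivalence finishes the proof.

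The main obstacle I expect is the clean justification that the $\mathbb{F}$-compensator survives as the $\mathbb{G}$-compensator — i.e. verifying the immersion $\mathbb{F}\hookrightarrow\mathbb{G}$ under $Q$ and checking that one genuinely gets the \emph{same} (hence continuous, hence $\mathbb{G}$-predictable) increasing process rather than merely \emph{a} decomposition. This rests on: (i) an $\mathbb{F}$-martingale being a $\mathbb{G}$-martingale under $Q$, which follows from $Q$-independence via conditioning; (ii) the fact that a continuous $\mathbb{F}$-adapted process is $\mathbb{G}$-predictable; and (iii) uniqueness of the compensator (Doob–Meyer). An alternative, if one prefers to avoid compensators, is the direct predictable-time argument: approximate a $\mathbb{G}$-predictable time $\rho$ hitting $\sigma$ by a sequence in the generating product algebra and use a Fubini argument over the $Q$-product structure to reduce $Q(\sigma = \rho)$ to an integral of $\mathbb{F}$-predictable hitting probabilities, each of which vanishes by $\mathbb{F}$-total inaccessibility of $\sigma$; but the compensator route is shorter and more robust, so that is the one I would write up.
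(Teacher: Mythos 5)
Your proposal is correct and follows essentially the same route as the paper: pass to the decoupling measure $Q$, observe that the continuous $(Q,\mathbb{F})$-compensator of $\mathbb{I}_{\sigma\le\cdot}$ remains its $(Q,\mathbb{G})$-compensator by independence (immersion), and transfer back to $P$ by equivalence. No gaps.
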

\begin{proof}
It is enough to prove the statement for $\mathbb{F}$ and $M$.\\
Let $\eta$ be an $\mathbb{F}$-totally inaccessible stopping
time.~Then, since condition $\textrm{(D)}$ is in force, Remark \ref{rem:total-inacess} ensures $\eta$  to avoid $\mathbb{H}$-stopping times and therefore $\eta$ turns out to be $\mathbb{G}$-totally inaccessible  (see Lemma 3.5 in \cite{jean-coku-nike12}).
\end{proof}
Recalling that
all $\mathbb{F}$-accessible random times are also
$\mathbb{G}$-accessible, by previous proposition we immediately derive next result.
\begin{corollary}\label{cor--inaccessible-time-from-F-to_G}
The $(P,\mathbb{F})$-decomposition
of $M$ and the $(P,\mathbb{H})$-decomposition of $N$ coincide with
the decompositions given in (\ref{Yoeurp-decomposition}).
\end{corollary}
\begin{proposition}\label{overlap}
For all $t$ in $[0,T]$ it
holds $P^*$-a.s.~and $P$-a.s.
\begin{equation}\label{eq-covariation0}[M,N]_t=[M^{ dp},N^{dp}]_t.\end{equation}
\end{proposition}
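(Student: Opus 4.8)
The plan is to work under $P^*$, where $M$ and $N$ are independent $\mathbb{G}$-martingales, and exploit that the covariation of independent martingales is purely discontinuous and supported on their common jumps; then transfer the identity to $P$ using invariance of $[M,N]$ under equivalent changes of measure.

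First I would recall that for two independent square-integrable $(P^*,\mathbb{G})$-martingales the product $MN$ is a martingale, so $[M,N]$ is itself a $(P^*,\mathbb{G})$-martingale starting from $0$. Moreover independence forces the continuous parts to contribute nothing: using the decompositions in (\ref{Yoeurp-decomposition}), $\langle M^c,N^c\rangle$ is the predictable covariation of two independent continuous martingales, hence null, and since $M^c$ and $N^c$ are continuous local martingales this gives $[M^c,N^c]=0$; likewise any bracket between a continuous part and a purely discontinuous part vanishes. Thus $[M,N]=[M^{dp}+M^{dq},\,N^{dp}+N^{dq}]$, and because a purely discontinuous local martingale with accessible jumps is strongly orthogonal to one with totally inaccessible jumps (Theorem \ref{thm-decomp-yoeurp}), the cross terms $[M^{dp},N^{dq}]$ and $[M^{dq},N^{dp}]$ are martingales starting from $0$; I would then argue they vanish identically — for instance, $[M^{dp},N^{dq}] = \sum_{s\le\cdot}\Delta M^{dp}_s\Delta N^{dq}_s$ is the optional covariation of a process jumping only at $\mathbb{G}$-accessible times with one jumping only at $\mathbb{G}$-totally inaccessible times, and an accessible and a totally inaccessible stopping time are a.s.\ disjoint, so the sum is empty.

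It remains to handle $[M^{dq},N^{dq}]$, and this is the one place where independence (not just orthogonality) is really used: $M^{dq}$ and $N^{dq}$ are independent purely discontinuous martingales, so $[M^{dq},N^{dq}]$ is a martingale null at $0$ whose total variation process $\sum_{s\le\cdot}|\Delta M^{dq}_s||\Delta N^{dq}_s|$ is dominated by a process concentrated on the (at most countable, exhausted by totally inaccessible stopping times) union of jump times of $M^{dq}$; by independence, for each such $\mathbb{G}$-stopping time $\sigma$ exhausting the jumps of $M^{dq}$ one has $\Delta N^{dq}_\sigma=0$ $P^*$-a.s.\ (a fixed-in-$\mathbb{H}$ time is avoided by an independent $\mathbb{F}$-quasi-left-continuous martingale, and symmetrically), so $[M^{dq},N^{dq}]\equiv 0$. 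This is the step I expect to be the main obstacle — making precise that independence kills the totally inaccessible cross bracket — but it is exactly the informal argument sketched before the statement in the introduction (two independent martingales cannot share a jump time that is totally inaccessible for one of them), so the writing is about formalizing a known fact. Combining, $[M,N]=[M^{dp},N^{dp}]$ $P^*$-a.s.

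Finally, to pass from $P^*$ to $P$: the optional covariation $[M,N]$ is a pathwise limit of sums of increments of $M$ and $N$ along refining partitions, hence is unchanged under any measure equivalent to $P^*$, and $P\ll P^*\ll P$ on $\mathcal{G}_T$ by construction of $P^*$ in (\ref{eq-mgpreserving-decoupl-measure}) together with condition \textbf{D)}. By Corollary \ref{cor-inaccessible-time-from-F-to_G} the processes $M^{dp}$ and $N^{dp}$ appearing in (\ref{Yoeurp-decomposition}) are the accessible parts of $M$ and $N$ under $P$ as well, so the identity $[M,N]_t=[M^{dp},N^{dp}]_t$ holds $P$-a.s.\ for all $t$, as claimed.
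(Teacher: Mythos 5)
Your proposal is correct and follows essentially the same route as the paper: decompose $M$ and $N$ via Theorem \ref{thm-decomp-yoeurp} under $P^*$, use independence (and the fact that a totally inaccessible stopping time cannot coincide with an independent random time, nor with an accessible one) to annihilate all brackets except $[M^{dp},N^{dp}]$, and transfer to $P$ by invariance of the optional covariation under equivalent changes of measure. The only cosmetic difference is that you expand both factors at once into nine brackets while the paper expands one factor at a time.
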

\begin{proof}
By the representation formula (\ref{Yoeurp-decomposition}), the linearity of the covariation operator and its invariance
under equivalent changes of measures it follows $P^*$-a.s.~and $P$-a.s.
\begin{equation*}
[M,N]=[M^{ c},N]+[M^{dp},N]+[M^{ dq},N].
\end{equation*}
In particular $M^{c}$ and $N$ as well as $M^{dq}$ and $N$ are
$(P^*,\mathbb{G})$- independent martingales.~Then, as well known,
one has $[M^{c},N]=0$ (see Theorem \ref{thm-decomp-yoeurp} and
Corollary 4.55 points b) and c) in \cite{ja-sh03}).~By Remark \ref{rem:total-inacess}
 it also holds
$[M^{dq},N]=0$.~Summarizing
\begin{equation*}
[M,N]=[M^{dp},N]
\end{equation*}
and similarly, by using the representation
(\ref{Yoeurp-decomposition}) for $N$, we get the thesis.
\end{proof}
It is worthwhile to note that (\ref{eq-covariation0}) can be explicitly written as
\begin{equation}\label{ref-rep}
[M,N]_t=[M^{dp},N^{dp}]_t=\sum_{n,l}\Delta
M^{dp}_{\eta^{dp}_{n}}\Delta
N^{dp}_{\tau^{dp}_{l}}\mathbb{I}_{{\eta^{dp}_{n}}={\tau^{dp}_{l}}}\;\mathbb{I}_{{\eta^{dp}_{n}}\le
t},
\end{equation}
 where $(\eta^{dp}_n)_{n}$ and $(\tau^{dp}_l)_{l}$ are
 sequences of  $\mathbb{G}$-accessible
 jump times of $M$ and $N$, respectively.\bigskip\\
In the previous proposition  the covariation process $[M,N]$ is
derived by using the $(P^*,\mathbb{G})$-decompositions defined in
(\ref{Yoeurp-decomposition}), which coincide with the $(P,\mathbb{F})$-decomposition of $M$ and the $(P,\mathbb{H})$-decomposition of $N$ (see Corollary
\ref{cor--inaccessible-time-from-F-to_G}).~It is to note that $\mathbb{F}$ ($\mathbb{H}$) could be different from the natural filtration $\mathbb{F}^M$ of $M$ ($\mathbb{F}^N$ of $N$).~When using the natural filtrations different Yoeurp's decompositions for $M$ and $N$ could arise.~As an example, a jump time of $M$
could be
 a $\mathbb{G}$-accessible stopping time and an $\mathbb{F}^M$-totally inaccessible one,
 and this would imply that $M^{dp}\neq M^{\mathbb{F}^M,
 dp}$.~However, as we will show now, the covariation process $[M,N]$ still coincides with the covariation process of the natural accessible martingale parts.
\begin{lemma}\label{accessible times}
Let $\eta$ and $\tau$ be an
$\mathbb{F}^M$-stopping time and an $\mathbb{F}^N$-stopping time,
respectively.~If
$P(\eta=\tau)>0$ then there exist an $\mathbb{F}^M$-accessible
stopping time $\eta^{dp}$ and an $\mathbb{F}^N$-accessible
 stopping time $\tau^{dp}$ such that on the set
$(\eta=\tau)$ it holds $\eta \,=\eta^{dp}$ and $\tau
\,=\tau^{dp}$, $P$-a.s.
\end{lemma}
\begin{proof}
The representation of $\eta$ given by (\ref{eq-eta-dec}) and (\ref{eq-time-decomposition}) yields
\begin{align*}
\eta\,\mathbb{I}_{(\eta=\tau)}=\eta\,\mathbb{I}_{(\eta=\tau=+\infty)}+\eta\,\mathbb{I}_{(\eta=\tau)\cap
A}+\eta\,\mathbb{I}_{(\eta=\tau)\cap B}.
\end{align*}
Since $A=(\eta=\eta^{dp}< +\infty)$ and $B=(\eta=\eta^{dq}< +\infty)$ and therefore $(\eta=\eta^{dp}=\eta^{dq}=+\infty)$ on $(A\cup B)^c$,
one immediately derives that $P$-a.s.
\begin{align*}
\eta\,\mathbb{I}_{(\eta=\tau)}=\eta^{dp}\,\mathbb{I}_{(\eta=\tau=+\infty)}+\eta^{dp}\,\mathbb{I}_{(\eta^{dp}=\tau)\cap
A}+\eta^{dq}\,\mathbb{I}_{(\eta^{dq}=\tau)\cap B}.
\end{align*}
Consider now the equivalent decoupling measure $Q$ introduced in condition \textrm{(D)}.~By Remark \ref{rem:total-inacess} $(\eta^{dq}=\tau)\cap B$ has null measure under  $Q$ so that a.s.
\begin{align*}
\eta\,\mathbb{I}_{(\eta=\tau)}=\eta^{dp}\,\mathbb{I}_{(\eta=\tau=+\infty)}+\eta^{dp}\,\mathbb{I}_{(\eta^{dp}=\tau)\cap
A}
\end{align*}
which implies $\eta \,=\eta^{dp}$  on $(\eta=\tau)$.\bigskip\\
Analogously one shows that $\tau \,=\tau^{dp}$  on
$(\eta=\tau)$.
\end{proof}
\begin{proposition}
For all $t$ in $[0,T]$ it
holds $P^*$-a.s.~and $P$-a.s.
\begin{equation*}
[M,N]_t=[M^{\mathbb{F}^M,dp},N^{\mathbb{F}^N,dp}]_t
\end{equation*}
where $M^{\mathbb{F}^M,dp}$ and $N^{\mathbb{F}^N,dp}$ are the accessible martingale parts in the $(P,\mathbb{F}^M)$ and the
$(P,\mathbb{F}^N)$-decompositions of $M$ and $N$, respectively.
\end{proposition}
\begin{proof}
If $\eta$ and $\tau$ are jump times  of $M^{dp}$ and $N^{dp}$, respectively, then they are
$\mathbb{F}^M$-stopping time and $\mathbb{F}^N$-stopping time,
respectively.~If $P(\eta=\tau)>0$ then, by Lemma
\ref{accessible times}, there exist an $\mathbb{F}^M$-accessible
stopping time of $M$, $\hat{\eta}^{dp}$, and an
$\mathbb{F}^N$-accessible stopping time of $N$, $\hat{\tau}^{dp}$,
such that   $P$-a.s.
$$\eta \mathds{1}_{\eta=\tau}=\hat{\eta}^{dp} \mathds{1}_{\eta=\tau} \text{  and  }  \tau \mathds{1}_{\eta=\tau}=\hat{\tau}^{dp} \mathds{1}_{\eta=\tau}.$$
Moreover
$\hat{\eta}^{dp}$ is a jump times of the accessible martingale part $M^{\mathbb{F}^M,
dp}$ in the $(P,\mathbb{F}^M)$-decomposition of $M$ and  $\hat{\tau}^{dp}$ is a jump times of the accessible martingale part $N^{\mathbb{F}^N,
dp}$ in the $(P,\mathbb{F}^N)$-decomposition of $N$.~Then using (\ref{ref-rep})
\begin{align*}
[M,N]_t=&\sum_{n,l}\mathbb{I}_{{\hat{\eta}^{dp}_n}\le
t}\mathbb{I}_{{\hat{\eta}^{dp}_n}={\hat{\tau}^{dp}_l}}\;\Delta
M^{dp}_{\hat{\eta}^{dp}_n}\Delta
N^{dp}_{\hat{\tau}^{dp}_l}=\notag\\
=&\sum_{n,l}\mathbb{I}_{{\hat{\eta}^{dp}_n}\le
t}\mathbb{I}_{{\hat{\eta}^{dp}_n}={\hat{\tau}^{dp}_l}}\;\Delta
M^{\mathbb{F}^M, dp}_{\hat{\eta}^{dp}_n}\Delta
N^{\mathbb{F}^N,dp}_{\hat{\tau}^{dp}_l}=[M^{\mathbb{F}^M,
dp},N^{\mathbb{F}^N,dp}]_t,
\end{align*}
 where $(\hat{\eta}^{dp}_n)_{n}$ is the set of all $\mathbb{F}^M$-accessible
 jump times of $M$ and $(\hat{\tau}^{dp}_l)_{l}$ is the set of all $\mathbb{F}^N$-accessible
 jump times of $N$.
\end{proof}
\begin{remark}
Under condition $\textrm{(D)}$ the covariation process $[M,N]$ is identically zero
 whenever $M$
 or $N$ are c\`adl\`ag quasi-left continuous martingales
 (see pages 121-122
 in
\cite{he-wang-yan92}).~Among the others this is the case when at least one of
the reference filtrations of $M$
 and $N$ is quasi-left
 continuous.~In fact any martingale with quasi-left continuous
reference filtration is quasi-left continuous.~In particular this happens if $M$
 or $N$ belongs to the class of
L\'evy processes without deterministic jump times (see page 190 and Exercise 8, page 148, in
\cite{Prott}).
\end{remark}
\section{The Jacod's dimension of $\mathcal{H}^1(P,\mathbb{G})$}\label{sec-predict-times}

In this section we clarify how the Jacod's dimension of $\mathcal{H}^1(P,\mathbb{G})$ is affected by the mutual behavior either of the sharp brackets of $M$ and $N$ or of their accessible parts, $M^{dp}$ and $N^{dp}$.
\begin{lemma}\label{lemma-cns-cov0}
The kernels
  $d\langle M^{ dp}\rangle^{P,\mathbb{F}}$ and $d\langle N^{dp}\rangle^{P,\mathbb{H}}$ are mutually singular if and only if the covariation process
$[M,N]$ is $P$-a.s.~null.
\end{lemma}
\begin{proof}
Let $\{\eta^{dp}_n\}_{n\in\mathbb{ N}}$ and $\{\tau^{dp}_l\}_{l\in\mathbb{ N}}$ be the sequences of  accessible
 jump times of $M$ and of $N$, respectively, and, fixed $n\in
\mathbb{N}$ and $l\in
\mathbb{N}$, let $\{\eta_{n,m}\}_{m\in\mathbb{ N}}$ and $\{\tau_{l,h}\}_{h\in\mathbb{ N}}$ be
 enveloping sequences of $\eta^{dp}_n$ and $\tau^{dp}_l$, respectively.\bigskip\\
Proposition \ref{cor-explicit-compensator} applies to
give the following representations
\begin{align}\label{eq-repr-sharp-var-M^{dp}}
\langle M^{dp}\rangle^{P,\mathbb{F}}_t=\sum_{n\in
\mathbb{N}}\,\sum_{m\in \mathbb{N}}\,U_{n,m}\,\mathbb{I}_{\eta_{n,m}\le
t},\;\;\;\;\langle N^{dp}\rangle^{P,\mathbb{H}}_t=\sum_{l\in
\mathbb{N}}\,\sum_{h\in \mathbb{N}}\,V_{l,h}\,\mathbb{I}_{\tau_{l,h}\le
t},
\end{align}
with $U_{n,m}$ and $V_{l,h}$ defined by
$$U_{n,m}:=E^P\left[(\Delta
M^{dp}_{\eta_{n,m}})^2\; \mathbb{I}_{\eta^{dp}_n=\eta_{n,m}}
\mid\mathcal{F}_{{\eta_{n,m}}^-}\right],\;\;\;V_{l,h}:=E^P\left[(\Delta
N^{dp}_{\tau_{l,h}})^2\; \mathbb{I}_{\tau^{dp}_l=\tau_{l,h}}
\mid\mathcal{H}_{{\tau_{l,h}}^-}\right].$$
Assume first the mutual singularity of the kernels
  $d\langle M^{ dp}\rangle^{P,\mathbb{F}}$ and $d\langle N^{dp}\rangle^{P,\mathbb{H}}$.~Fixed $n,m$ and $l,h$, when
$$P(\eta_{n,m}=\tau_{l,h}< +\infty)>0,$$ representations
(\ref{eq-repr-sharp-var-M^{dp}}) implies that
\begin{equation}\label{kernel singularity}\mathbb{I}_{\{\eta_{n,m}=\tau_{l,h}< +\infty\}}U_{n,m}V_{l,h}=0, \ \ P\text{-a.s. }\end{equation}
In other words, if we introduce the measurable sets
$$A_{n,m}:=\{U_{n,m}=0\},\;\;\;B_{l,h}:=\{V_{l,h}=0\},$$
then by the mutual singularity the set
$$A^c_{n,m}\cap B^c_{l,h}\cap \{\eta_{n,m}=\tau_{l,h}< +\infty\}$$
has null measure.\\
Note that equality (\ref{ref-rep}) can be written in terms of enveloping sequences as
\begin{equation}\label{ref-rep-expl}
[M,N]_t=\sum_{n,l}\sum_{m,h}\mathbb{I}_{{\eta^{dp}_{n}}=\eta_{n,m}}\;\mathbb{I}_{{\tau^{dp}_{l}}=\tau_{l,h}}\;\Delta
M^{dp}_{\eta_{n,m}}\Delta
N^{dp}_{\tau_{l,h}}\mathbb{I}_{{\eta_{n,m}}={\tau_{l,h}}}\;\mathbb{I}_{{\eta_{n,m}}\le
t}.
\end{equation}
Therefore
 $[M,N]$ is $P$-a.s.~null if for any choice of  $n,m$ and $l,h$ such that
$P(\eta_{n,m}=\tau_{l,h}< +\infty)>0$ it holds
$$\mathbb{I}_{\{\eta_{n,m}=\tau_{l,h}< +\infty\}}\mathbb{I}_{{\eta^{dp}_{n}}=\eta_{n,m}}\;\Delta
M^{dp}_{\eta_{n,m}}\mathbb{I}_{{\tau^{dp}_{l}}=\tau_{l,h}}\;\Delta
N^{dp}_{\tau_{l,h}}=0, \ \ P\text{-a.s.}.$$
Equivalently $[M,N]$ is $P$-a.s.~null if
$$\tilde{A}^c_{n,m}\cap \tilde{B}^c_{l,h}\cap \{\eta_{n,m}=\tau_{l,h}< +\infty\}$$
is a set of null measure, where
$$\tilde{A}_{n,m}:=\{\Delta
M^{dp}_{\eta_{n,m}}\,\mathbb{I}_{{\eta^{dp}_{n}}=\eta_{n,m}}=0\},\;\;\;\tilde{B}_{l,h}:=\{\Delta
N^{dp}_{\tau_{l,h}}\,\mathbb{I}_{\tau^{dp}_l=\tau_{l,h}}=0\}.$$
This easily follows  considering that up to a null measure set  it holds
$$\tilde{A}^c_{n,m}\cap \tilde{B}^c_{l,h} \subset A^c_{n,m}\cap B^c_{l,h}.$$
In fact by construction $A_{n,m}\in\mathcal{F}_{{\eta_{n,m}}^-}$ and therefore
$$0=\int_{A_{n,m}}U_{n,m}\,dP=\int_{A_{n,m}}\,(\Delta
M^{dp}_{\eta_{n,m}})^2\; \mathbb{I}_{\eta^{dp}_n=\eta_{n,m}}\,dP$$
so that $A_{n,m}\subset \tilde{A}_{n,m}$  up to a null measure set.~A similar procedure proves that $B_{l,h}\subset \tilde{B}_{l,h}$ up to a null measure set and we conclude that $[M,N]\equiv 0$  $P$-a.s.\bigskip\\
Viceversa, assume $[M,N]\equiv 0$  $P$-a.s.~This implies that $P$-a.s.~any addend in the sum of the right-hand side of (\ref{ref-rep-expl}) is null or equivalently  $P$-a.s.~for all $n,m$ and $l,h$ for any time $t$
$$\mathbb{I}_{\eta^{dp}_{n}=\eta_{n,m}}\;\mathbb{I}_{\tau^{dp}_{l}=\tau_{l,h}}\;\mathbb{I}_{\eta_{n,m}={\tau_{l,h}}}\;\mathbb{I}_{\eta_{n,m}\le t}=0$$
since, when $\eta_{n,m}$ and $\tau_{l,h}$ are finite,
$\Delta M^{dp}_{\eta_{n,m}}\Delta N^{dp}_{\tau_{l,h}}$ is $P$-a.s.~different from zero.\\
More briefly
\begin{equation}\label{from covariation null}\mathbb{I}_{\eta^{dp}_{n}=\eta_{n,m}}\;\mathbb{I}_{\tau^{dp}_{l}=\tau_{l,h}}\;\mathbb{I}_{\{\eta_{n,m}={\tau_{l,h}}<+\infty\}}=0\;\;\forall n,m,l,h,\;\;\;P\textrm{-a.s.}\end{equation}
In order to prove the singularity of the kernels we need to show that, if $P(\eta_{n,m}=\tau_{l,h}< +\infty)>0$, then (\ref{kernel singularity}) holds.~Actually recalling that $P^*\sim P$ it is enough to show that
 \begin{equation*}\mathbb{I}_{\{\eta_{n,m}=\tau_{l,h}< +\infty\}}U_{n,m}V_{l,h}=0, \ \ P^*\text{-a.s. }\end{equation*}
 or equivalently
  \begin{equation*}E^{P*}[\mathbb{I}_{\{\eta_{n,m}=\tau_{l,h}< +\infty\}} U_{n,m}V_{l,h}]=0.\end{equation*}
 Since (\ref{eq:P-marginals}) holds then
$$U_{n,m}=E^{P^*}\left[(\Delta
M^{dp}_{\eta_{n,m}})^2\; \mathbb{I}_{\eta^{dp}_n=\eta_{n,m}}
\mid\mathcal{F}_{{\eta_{n,m}}^-}\right],\;\;\;V_{l,h}=E^{P^*}\left[(\Delta
N^{dp}_{\tau_{l,h}})^2\; \mathbb{I}_{\tau^{dp}_l=\tau_{l,h}}
\mid\mathcal{H}_{{\tau_{l,h}}^-}\right]$$
and therefore
\begin{align*}&E^{P*}[\mathbb{I}_{\{\eta_{n,m}=\tau_{l,h}< +\infty\}} U_{n,m}V_{l,h}]=\cr&
E^{P*}\left[\mathbb{I}_{\{\eta_{n,m}=\tau_{l,h}< +\infty\}}E^{P^*}\left[(\Delta
M^{dp}_{\eta_{n,m}})^2\; \mathbb{I}_{\eta^{dp}_n=\eta_{n,m}}
\mid\mathcal{F}_{{\eta_{n,m}}^-}\right]E^{P^*}\left[(\Delta
N^{dp}_{\tau_{l,h}})^2\; \mathbb{I}_{\tau^{dp}_l=\tau_{l,h}}
\mid\mathcal{H}_{{\tau_{l,h}}^-}\right]\right].\end{align*}
Now applying Lemma 4.3 in \cite{caltor15} and observing that the random variable $\mathbb{I}_{\{\eta_{n,m}=\tau_{l,h}<+\infty\}}$ is $\mathcal{F}_{{\eta_{n,m}}^-}\vee\mathcal{H}_{{\tau_{l,h}}^-}$-measurable,  we get
\begin{align*}&E^{P*}[\mathbb{I}_{\{\eta_{n,m}=\tau_{l,h}< +\infty\}} U_{n,m}V_{l,h}]=\cr&
E^{P*}\left[\mathbb{I}_{\{\eta_{n,m}=\tau_{l,h}< +\infty\}}E^{P^*}\left[(\Delta
M^{dp}_{\eta_{n,m}})^2\; \mathbb{I}_{\eta^{dp}_n=\eta_{n,m}}
(\Delta
N^{dp}_{\tau_{l,h}})^2\; \mathbb{I}_{\tau^{dp}_l=\tau_{l,h}}
\mid\mathcal{F}_{{\eta_{n,m}}^-}\vee\mathcal{H}_{{\tau_{l,h}}^-}\right]\right]=\cr&
E^{P*}\left[E^{P^*}\left[\mathbb{I}_{\{\eta_{n,m}=\tau_{l,h}< +\infty\}}(\Delta
M^{dp}_{\eta_{n,m}})^2\; \mathbb{I}_{\eta^{dp}_n=\eta_{n,m}}
(\Delta
N^{dp}_{\tau_{l,h}})^2\; \mathbb{I}_{\tau^{dp}_l=\tau_{l,h}}
\mid\mathcal{F}_{{\eta_{n,m}}^-}\vee\mathcal{H}_{{\tau_{l,h}}^-}\right]\right]
=\cr&
E^{P^*}\left[(\Delta M^{dp}_{\eta_{n,m}})^2\; (\Delta
N^{dp}_{\tau_{l,h}})^2\;\mathbb{I}_{\{\eta_{n,m}=\tau_{l,h}< +\infty\}}
\mathbb{I}_{\eta^{dp}_n=\eta_{n,m}}
 \mathbb{I}_{\tau^{dp}_l=\tau_{l,h}}\right]=0,
\end{align*}
where last equality follows by (\ref{from covariation null}).
\end{proof}
Next result states the equivalence between the mutual behavior of the sharp brackets of $M$
and $N$ and the existence of a  $\mathbb{G}$-basis
of dimension one.~The starting point is Theorem 4.5 in \cite{caltor15}, which requires both the $P$-triviality of the $\sigma$-algebras $\mathcal{F}_0$ and $\mathcal{H}_0$ and the $(P,\mathbb{F})$-PRP of $M$ and the $(P,\mathbb{H})$-PRP of $N$.\bigskip\\
\textit{From now on, by virtue of Theorem \ref{thm-2assetpricing}, we will assume the following condition \textrm{(A1)}}.\bigskip\\
 \textrm{(A1)}\;
 \textit{$\mathbb{P}(M,\mathbb{F})=\{P|_{\mathcal{F}_T}\}$} and
 \textit{$\mathbb{P}(N,\mathbb{H})=\{P|_{\mathcal{H}_T}\}$}.
\begin{proposition}\label{lemma: M+N basis}
Let $P^*$ be the probability
measure defined by (\ref{eq-mgpreserving-decoupl-measure}).
\phantom{pro}
\begin{itemize}
    \item [(i)] If the kernels $d \langle M\rangle^{P,\mathbb{F}}$ and $d \langle
N\rangle^{P,\mathbb{H}}$ are mutually singular, then
$M+N$ enjoys the $(P^*,\mathbb{G})$-PRP.
    \item [(ii)] If there exists $Z\in\mathcal{M}^2(P^*,\mathbb{G})$ which enjoys the $(P^*,\mathbb{G})$-PRP, then the kernels $d \langle M\rangle^{P,\mathbb{F}}$ and $d \langle
N\rangle^{P,\mathbb{H}}$ are mutually singular.
\end{itemize}
\end{proposition}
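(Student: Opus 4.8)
The plan is to deduce both items from the general p.r.p.-transfer machinery of \cite{caltor15} combined with the singularity criterion of Proposition~\ref{lemma-cns-cov0}. Throughout we work under $P^*$, under which, by construction (see (\ref{eq-mgpreserving-decoupl-measure})), $M$ and $N$ are independent $(P^*,\mathbb{G})$-martingales preserving their $P$-laws, and $\mathbb{G}$ is right continuous; moreover, by assumption \textbf{A1)}, $M$ enjoys the $(P,\mathbb{F})$-p.r.p.\ and hence the $(P^*,\mathbb{F})$-p.r.p., and likewise $N$ enjoys the $(P^*,\mathbb{H})$-p.r.p. First note that since $M$ and $N$ preserve their laws under $P^*$ and the sharp brackets $\langle M\rangle$, $\langle N\rangle$ are, for $M$ resp.\ $N$ alone, the same whether computed under $P$ or $P^*$ (the compensator of the same process with respect to the same filtration), the measures $d\langle M\rangle^{P^*,\mathbb{F}}$ and $d\langle M\rangle^{P,\mathbb{F}}$ agree, and similarly for $N$; so the hypothesis in i) is the same as mutual singularity of $d\langle M\rangle^{P,\mathbb{F}}$ and $d\langle N\rangle^{P,\mathbb{H}}$.

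For part i), I would argue in two steps. First, by Proposition~\ref{lemma-cns-cov0} applied under $P^*$ (equivalently under $P$, since $[M,N]$ and mutual singularity are invariant under equivalent change of measure), mutual singularity of $d\langle M\rangle^{P,\mathbb{F}}$ and $d\langle N\rangle^{P,\mathbb{H}}$ — which dominate $d\langle M^{dp}\rangle$ and $d\langle N^{dp}\rangle$ respectively — forces $[M,N]\equiv 0$, $P^*$-a.s. Second, invoke Theorem~4.5/4.11 of \cite{caltor15}: the triplet $(M,N,[M,N])$ is a $(P^*,\mathbb{G})$-basis, but here the third component vanishes, so $(M,N)$ is already a $(P^*,\mathbb{G})$-basis; the two are strongly orthogonal as independent $(P^*,\mathbb{G})$-martingales. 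It remains to collapse $(M,N)$ to the single martingale $M+N$. Since $[M,N]=0$, one has $[M+N]=[M]+[N]$ and $\langle M+N\rangle=\langle M\rangle+\langle N\rangle$; using the mutual singularity of $d\langle M\rangle$ and $d\langle N\rangle$, pick (e.g.\ via Lemma~\ref{lemma-disjoint-pred-supp}) disjoint predictable sets $C^M,C^N$ carrying $d\langle M\rangle$ and $d\langle N\rangle$. Then $M=\big(\mathbb{I}_{C^M}\cdot(M+N)\big)$ and $N=\big(\mathbb{I}_{C^N}\cdot(M+N)\big)$ up to indistinguishability — because $\mathbb{I}_{C^N}\cdot M$ and $\mathbb{I}_{C^M}\cdot N$ are $L^2$-martingales with zero sharp bracket — so any stochastic integral against $(M,N)$ is a stochastic integral against $M+N$, and conversely. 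Hence $M+N$ is a one-dimensional $(P^*,\mathbb{G})$-basis.

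For part ii), suppose some single $(P^*,\mathbb{G})$-martingale $Z$ is a basis, i.e.\ $\mathbb{G}$ has multiplicity one under $P^*$. I would argue by contraposition: if $d\langle M\rangle^{P,\mathbb{F}}$ and $d\langle N\rangle^{P,\mathbb{H}}$ are \emph{not} $P^*$-a.s.\ mutually singular, produce two strongly orthogonal nonzero $(P^*,\mathbb{G})$-martingales that cannot both be obtained as stochastic integrals of a single $Z$. The natural candidates are $M$ and $N$ themselves: they are strongly orthogonal under $P^*$ ($[M,N]=[M^{dp},N^{dp}]$ by Proposition~\ref{overlap}, and while this need not vanish, orthogonality in the sense $E^{P^*}[M_TN_T]=E^{P^*}[M_T]E^{P^*}[N_T]$ follows from independence; more precisely $MN - [M,N]$ is a $P^*$-martingale and $[M,N]$, being a sum of common jumps of independent processes, is itself a $(P^*,\mathbb{G})$-martingale, so $M$ and $N$ are strongly orthogonal). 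If both $M=(\xi\cdot Z)$ and $N=(\zeta\cdot Z)$ for predictable $\xi,\zeta$, then $d\langle M\rangle = \xi^2\,d\langle Z\rangle$ and $d\langle N\rangle=\zeta^2\,d\langle Z\rangle$, so both are absolutely continuous with respect to $d\langle Z\rangle$ and hence, by the Lebesgue decomposition, cannot be mutually singular unless one of them vanishes on a $d\langle Z\rangle$-full set — but $\langle M\rangle$ and $\langle N\rangle$ are each nontrivial (else $M$ or $N$ is constant, contradicting \textbf{A1)} unless the corresponding space is trivial, in which case mutual singularity holds vacuously). This contradiction proves ii).

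The main obstacle I anticipate is the passage, in part i), from "$(M,N)$ is a $(P^*,\mathbb{G})$-basis" to "$M+N$ is a $(P^*,\mathbb{G})$-basis": one must verify carefully that the componentwise stochastic integrals $\mathbb{I}_{C^M}\cdot(M+N)$ and $\mathbb{I}_{C^N}\cdot(M+N)$ really recover $M$ and $N$, which rests on the disjointness of the predictable supports $C^M$, $C^N$ from Lemma~\ref{lemma-disjoint-pred-supp} together with the vanishing of $[M,N]$, and on matching the vector stochastic integral of \cite{cha-stri94} against $(M,N)$ with the scalar integral against $M+N$. A secondary subtlety is keeping straight that all bracket computations and singularity statements are insensitive to the choice between $P$ and $P^*$ on the relevant marginal $\sigma$-algebras, which is exactly what (\ref{eq-mgpreserving-decoupl-measure}) guarantees.
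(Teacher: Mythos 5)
Your part i) is correct and is essentially the paper's argument: Proposition \ref{lemma-cns-cov0} kills $[M,N]$, Theorem 4.5 of \cite{caltor15} then gives the basis $(M,N)$, and Lemma \ref{lemma-disjoint-pred-supp} supplies the disjoint predictable supports $C^M,C^N$ used to collapse to $M+N$. (The paper rewrites the integrand as $\lambda=\gamma\mathbb{I}_{C^M}+\eta\mathbb{I}_{C^N}$ and checks $\int\gamma\,dM=\int\lambda\,dM$, $\int\eta\,dN=\int\lambda\,dN$; you instead recover $M=\mathbb{I}_{C^M}\cdot(M+N)$ and $N=\mathbb{I}_{C^N}\cdot(M+N)$. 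These are two phrasings of the same computation and both are fine.)

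Part ii), however, contains a genuine error. Writing $M=\xi\cdot Z$, $N=\zeta\cdot Z$, you assert that since $d\langle M\rangle=\xi^2\,d\langle Z\rangle$ and $d\langle N\rangle=\zeta^2\,d\langle Z\rangle$ are both absolutely continuous with respect to $d\langle Z\rangle$, they ``cannot be mutually singular unless one of them vanishes on a $d\langle Z\rangle$-full set.'' That implication is false: two measures with densities of disjoint support relative to a common dominating measure are mutually singular without either vanishing (take $\xi^2=\mathbb{I}_{[0,1/2]}$, $\zeta^2=\mathbb{I}_{[1/2,1]}$ against Lebesgue measure). Worse, even granting it, it would only reproduce your contraposition hypothesis (``not mutually singular''), so no contradiction is reached; the argument does not close. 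The fix is the direct argument the paper uses, for which you already have every ingredient: since $M$ and $N$ are independent $(P^*,\mathbb{G})$-martingales, $0=\langle M,N\rangle^{P^*,\mathbb{G}}=\int_0^\cdot\xi_s\zeta_s\,d\langle Z\rangle^{P^*,\mathbb{G}}_s$, hence $\xi\zeta=0$ $d\langle Z\rangle\otimes P^*$-a.e.; therefore $d\langle M\rangle=\xi^2\,d\langle Z\rangle$ and $d\langle N\rangle=\zeta^2\,d\langle Z\rangle$ are carried by the disjoint sets $\{\xi\neq0\}$ and $\{\zeta\neq0\}$ and are mutually singular (after identifying $\langle M\rangle^{P,\mathbb{F}}=\langle M\rangle^{P^*,\mathbb{G}}$ and $\langle N\rangle^{P,\mathbb{H}}=\langle N\rangle^{P^*,\mathbb{G}}$, as you correctly note elsewhere). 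You state the strong orthogonality of $M$ and $N$ at length but never deploy it in the decisive step; that is exactly the missing piece.
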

\begin{proof}
(i) Under $P^*$ the triplet $(M,N,[M,N])$  is a
$(P^*,\mathbb{G})$-basis (see Theorem 4.5 in \cite{caltor15}).~If
 $d\langle M\rangle^{P,\mathbb{F}}$ and $d\langle
N\rangle^{P,\mathbb{H}}$ are mutually singular, then the
same holds for  $d\langle M^{dp}\rangle^{P,\mathbb{F}}$ and
$d\langle N^{ dp}\rangle^{P,\mathbb{H}}$.~By Lemma \ref{lemma-cns-cov0}  the covariation process
$[M,N]$ is $P$-a.s.~null, so that the $(P^*,\mathbb{G})$-basis
reduces to the pair $(M,N)$.\bigskip\\
As a consequence, if $K\in
L^2(\Omega, \mathcal{G}_T, P^*)$, then
\begin{equation}\label{eq-M,N-representation}
K=k_0+\int_0^T\gamma_sdM_s+\int_0^T\eta_sdN_s,
\end{equation}
where $k_0$ is a constant, $(\gamma)_{t\in[0,T]}$ and $(\eta)_{t\in[0,T]}$ are
$\mathbb{G}$-predictable processes satisfying
$E^{P^*}\left[\int_0^T\gamma^2_t\,d[M]_t\right]<+\infty$ and
$E^{P^*}\left[\int_0^T\eta^2_t\,d[N]_t\right]<+\infty$, respectively.\bigskip\\
Moreover, since $P^*|_{\mathcal{F}_T}$  coincides with
$P|_{\mathcal{F}_T}$ then on one side $\langle
M\rangle^{P,\mathbb{F}}=\langle M\rangle^{P^*,\mathbb{F}}$ and,
since $P^*$ decouples $\mathbb{F}$ and $\mathbb{H}$, on the other
side $\langle M\rangle^{P^*,\mathbb{F}}=\langle
M\rangle^{P^*,\mathbb{G}}$ and therefore
$$
\langle M\rangle^{P,\mathbb{F}}=\langle M\rangle^{P^*,\mathbb{G}}.
$$
Analogously  $\langle N\rangle^{P,\mathbb{H}}=\langle
N\rangle^{P^*,\mathbb{G}}$.~Then by Proposition
\ref{lemma-disjoint-pred-supp} there exist  two $\mathbb{G}$-predictable subsets of $\Omega\times[0,T]$, $C^M$ and $C^N$, with $C^N=
\overline{C^M}$, such that
  $$\int_{C^N}d\langle M\rangle^{P^*,\mathbb{G}}_t=0,\;\;\;\;\;\;\;
 \int_{C^M}d\langle N\rangle^{P^*,\mathbb{G}}_t=0.$$
 Let
$(\lambda)_{t\in[0,T]}$ be defined at time $t$ by
$$
\lambda_t=\gamma_t\mathbb{I}_{C^M}(t)+\eta_t\mathbb{I}_{C^N}(t).
$$
Since the indicator functions $\left(\mathbb{I}_{C^M}(t)\right)_{t\in[0,T]}$ and
$\left(\mathbb{I}_{C^N}(t)\right)_{t\in[0,T]}$ of the predictable sets $C^M$ and $C^N$, respectively, are
predictable processes, $\lambda$ is a $\mathbb{G}$-predictable process.\bigskip\\
Moreover
$E^{P^*}\left[\int_0^T\lambda^2_t\,d[M+N]_t\right]<+\infty$ and
the following equalities hold
$$E^{P^*}\left[\int_0^T\left(\lambda_s-\gamma_s\right)^2d\langle
M\rangle^{P^*,\mathbb{G}}_s\right]=E^{P^*}\left[\int_0^T\left(\lambda_s-\eta_s\right)^2d\langle
N\rangle^{P^*,\mathbb{G}}_s\right]=0.$$
As a consequence we get
$$
\int_0^T\gamma_sdM_s=\int_0^T\lambda_sdM_s,\;\;\;\;\;\;
\int_0^T\eta_sdN_s=\int_0^T\lambda_sdN_s,
$$
so that (\ref{eq-M,N-representation}) can be rewritten as
\begin{equation*}
K=k_0+\int_0^T\lambda_sd(M_s+N_s)
\end{equation*}
and by the arbitrariness of $K$ we derive that $M+N$ enjoys the
$(P^*,\mathbb{G})$-PRP.\bigskip\\
(ii) Let $Z$ enjoy the $(P^*,\mathbb{G})$-PRP.~Then
\begin{equation*}
M_t=\int_0^t\alpha_sdZ_s, \ \ \ \ N_t=\int_0^t\beta_sdZ_s,
\end{equation*}
with $(\alpha)_{t\in[0,T]}$ and $(\beta)_{t\in[0,T]}$
$\mathbb{G}$-predictable processes which satisfy
$E^{P^*}\left[\int_0^T\alpha^2_t\,d[Z]_t\right]<+\infty$ and
$E^{P^*}\left[\int_0^T\beta^2_t\,d[Z]_t\right]<+\infty$, respectively.
\bigskip\\
By construction $M$ and $N$ are independent
$(P^*,\mathbb{G})$-martingales, so that
$$
0=\langle M,
N\rangle^{P^*,\mathbb{G}}_\cdot=\int_0^\cdot\alpha_s\beta_sd\langle
Z\rangle^{P^*,\mathbb{G}}_s,
$$
that is \begin{align}\label{eq:alfabeta}\alpha_\cdot\beta_\cdot=0,\;\;\;\;d\langle
Z\rangle^{P^*,\mathbb{G}}_\cdot\text{-}a.s.\end{align}
Observe that
$$ \langle
M\rangle^{P,\mathbb{F}}_\cdot=\langle
M\rangle^{P^*,\mathbb{G}}_\cdot=\int_0^\cdot\alpha^2_sd\langle
Z\rangle^{P^*,\mathbb{G}}_s,
$$
and
$$\langle
 N\rangle^{P,\mathbb{H}}_\cdot=\langle
 N\rangle^{P^*,\mathbb{G}}_\cdot=\int_0^\cdot\beta^2_sd\langle
Z\rangle^{P^*,\mathbb{G}}_s.$$
In order to prove that  the kernels $d \langle M\rangle^{P,\mathbb{F}}$ and $d \langle
N\rangle^{P,\mathbb{H}}$ are mutually
singular, let us introduce the predictable set
$$\Gamma:=\{(\omega,t), \alpha^2_t(\omega)>0\}.$$
Then $$\int_{\overline{\Gamma^\omega}}\alpha^2_s(\omega)d\langle
Z\rangle^{P^*,\mathbb{G}}_s(\omega)=0,$$
or equivalently
$$ d\langle M\rangle^{P,\mathbb{F}}_\cdot(\omega)(\overline{\Gamma^\omega})=0,$$
that is the $\omega$-section of $\Gamma$ is a support of the measure $d\langle M\rangle^{P,\mathbb{F}}_\cdot(\omega)$.~Moreover
\begin{align*}&\int_{\Gamma^\omega}\beta^2_s(\omega)d\langle
Z\rangle^{P^*,\mathbb{G}}_s(\omega)=\cr&\int_{\{t,\, \alpha_t(\omega)\neq 0\}}\beta^2_s(\omega)d\langle
Z\rangle^{P^*,\mathbb{G}}_s(\omega)=\cr&\int_{[0,T]}\mathds{1}_{\{t,\, \alpha_t(\omega)\neq 0\}}(s)\beta^2_s(\omega)d\langle
Z\rangle^{P^*,\mathbb{G}}_s(\omega)=\cr&\int_{[0,T]}\mathds{1}_{\{t,\, \alpha_t(\omega)\neq 0\}}(s)\mathds{1}_{\{t,\, \beta_t(\omega)\neq 0\}}(s)\beta^2_s(\omega)d\langle
Z\rangle^{P^*,\mathbb{G}}_s(\omega)=\cr&\int_{[0,T]}\mathds{1}(s)_{\{t,\, (\alpha_t\beta_t)(\omega)\neq 0\}}\beta^2_s(\omega)d\langle
Z\rangle^{P^*,\mathbb{G}}_s(\omega)=0,\end{align*}
where the last equality follows by (\ref{eq:alfabeta}).~Therefore
$$ d\langle N\rangle^{P,\mathbb{H}}_\cdot(\omega)(\Gamma^\omega)=0,$$
 that is
 the $\omega$-section of $\overline{\Gamma}$ is a support of the measure $d\langle N\rangle^{P,\mathbb{H}}_\cdot(\omega)$ and we get proof.
\end{proof}
Finally we state the main result of this section.~For the
sake of clarity we stress that $\langle
M\rangle^{P^*,\mathbb{F}}=\langle M\rangle^{P,\mathbb{F}}$ and
$\langle N\rangle^{P^*,\mathbb{H}}=\langle
N\rangle^{P,\mathbb{H}}$.
\begin{theorem}\label{thm-singuar-brackets}
The dimension of $\mathcal{H}^1(P,\mathbb{G})$ is at most three and in particular is
\begin{itemize}
\item [(i)]  equal to one if and only if  the kernels $d\langle M\rangle^{P,\mathbb{F}}$ and $d\langle N\rangle^{P,\mathbb{H}}$ are mutually singular;
\item [(ii)] equal to two if  the kernels
 $d\langle M\rangle^{P,\mathbb{F}}$ and $d\langle N\rangle^{P,\mathbb{H}}$ are not mutually singular but the kernels $d\langle M^{dp}\rangle^{P,\mathbb{F}}$ and $d\langle N^{dp}\rangle^{P,\mathbb{H}}$
are mutually singular.
\end{itemize}
\end{theorem}
\begin{proof}
$dim\big(\mathcal{H}^1(P,\mathbb{G})\big)$ coincides with $dim\big(\mathcal{H}^1(P^*,\mathbb{G})\big)$ by Proposition \ref{thm-dimension}.~By Theorem 4.5 in \cite{caltor15} the set $\mathbb{P}\big((M,N,[M,N]),\mathbb{G}\big)$ is a singleton, that is $(M,N,[M,N])$ enjoys the $(P^*,\mathbb{G})$-PRP and therefore, by the remark following Definition \ref{def_dimension}, one gets $dim\big(\mathcal{H}^1(P^*,\mathbb{G})\big)$ is less or equal to three.\\
(i) This point immediately follows by Proposition \ref{lemma: M+N basis}.\\
(ii) If $dim\big(\mathcal{H}^1(P^*,\mathbb{G})\big)$ is greater than one previous
point implies that the kernels $d\langle M\rangle^{P,\mathbb{F}}$ and $ d\langle
N\rangle^{P,\mathbb{H}}$ are not mutually singular.~Moreover if the kernels
$d\langle M^{dp}\rangle^{P,\mathbb{F}}$ and $d\langle
N^{dp}\rangle^{P,\mathbb{H}}$ are  mutually singular, as
stated by Lemma \ref{lemma-cns-cov0}, $[M,N]=0$ and $\mathbb{P}\big((M,N),\mathbb{G}\big)=\{P^*|_{\mathbb{G}}\}$, so that $dim\big(\mathcal{H}^1(P^*,\mathbb{G})\big)$ is equal to two.
\end{proof}
\begin{remark}\label{rem-ese-jean}
Proposition \ref{lemma: M+N basis} and Theorem
\ref{thm-singuar-brackets} have been inspired by Theorem 9.5.2.4, page 540,
Theorem 9.5.2.5, page 541,  and subsequent arguments in
\cite{Jean-yor-chesney} and in particular by the following
example.~A Brownian motion
 $W$
 and a compensated Poisson martingale $\Pi$ are considered.~If $f$ and $g$ are
 deterministic functions such that $fg=0$ then
 the process $dX_t=f(t)\,dW_t+g(t)\,d\Pi_t$  enjoys the PRP with
 respect to its natural filtration.
\end{remark}
\section{$(P,\mathbb{G})$-martingale representation}
In this section, in the same setting of last two sections and in particular under the standing assumptions \textrm{(D)} and \textrm{(A1)}, we present our martingale representation result on $\mathbb{G}$ under $P$.~First we deal with the case when $\mathbb{G}$ is the progressive enlargement of $\mathbb{F}$ by a general random time $\tau$ and then we consider the case when $\mathbb{G}$ is the enlargement of $\mathbb{F}$ by the reference  filtration $\mathbb{H}$ of a martingale enjoying the PRP.\bigskip\\
In view of possible applications to financial models and
although this is not essential for the validity of the result, we assume that $M$ is the martingale part of a square-integrable special semi-martingale  $X=(X_t)_{t\in [0,T]}$ on $(\Omega,\mathcal{F},P)$
 with canonical decomposition
 \begin{equation}\label{semi-martingale}
 X_t=X_0+M_t+A_t,
 \end{equation}
such that
 \begin{equation*}
 E^P\left[X_0^2+[ M]_T+|A|^2_T\right]<+\infty.
 \end{equation*}
As usual $A$ is a
predictable process of finite variation, $M_0=A_0=0$ and $|A|$
denotes the total variation process of $A$ (see
 VII-98, page 294, in \cite{del-me-b}).~In particular we assume that there
 exists a predictable process $\alpha=(\alpha_t)_{t\in [0,T]}$ $P$-a.s.~in the
 space $L^2_{loc}([0,T],\mathcal{B}([0,T]),d\langle M\rangle^{P,\mathbb{F}})$ such that
\begin{equation}\label{structure condition}A_t=\int_0^t\alpha_s\,d\langle M\rangle^{P,\mathbb{F}}_s.\end{equation}
\begin{remark}
We note that if
$\mathbb{P}\left(X,\mathbb{F}\right)$ is a singleton then condition \textrm{(A1)} for $M$ holds,
that is $\mathbb{P}(M,\mathbb{F})=\{P|_{\mathcal{F}_T}\}$ (see Proposition 2.2 in \cite{caltor18}).~Moreover
if $\mathbb{P}\left(X,\mathbb{F}\right)=\{P^X\}$ and the derivative $dP^X/dP|_{\mathcal{F}_T}$ is locally square-integrable,
then (\ref{structure condition}) holds (see Proposition 4 in \cite{schweizer92}).
\end{remark}
\begin{proposition}
$X$ and $M$ share the same jump times.
\end{proposition}
\begin{proof}
By Yoeurp's decomposition $\langle M\rangle^{P,\mathbb{F}}$ coincides with the sum of
$\langle M^c\rangle^{P,\mathbb{F}}$, $\langle M^{dq}\rangle^{P,\mathbb{F}}$ and $\langle M^{dp}\rangle^{P,\mathbb{F}}$.~The first two processes
are continuous and, by Proposition \ref{cor-explicit-compensator}, $\langle M^{dp}\rangle^{P,\mathbb{F}}$ is a pure jump process which
shares its jump times with $M^{dp}$.~It follows that the jump times of $A$ are a subset of those of $M$ and the thesis follows
by equality (\ref{semi-martingale}).
\end{proof}
\subsection{Progressive enlargement by a random time}
Let $\tau$ be any random time in $[0,+\infty]$.~For the sake of notational convenience we will rename by $H$
 the compensated occurrence process of $\tau$, $H^{\tau}$, defined as in Subsection \ref{subsec-compensated op} and by  $\mathbb{H}$ its natural filtration.~By Proposition \ref{prop-regularity} $H$ is square-integrable and by Theorem \ref{prop-H-prp} it
enjoys the $(P,\mathbb{H})$-PRP.~Here $H$  plays the role of $N$ and the filtration $\mathbb{G}$ defined in (\ref{def-G})
is the progressive enlargement of
$\mathbb{F}$ by $\tau$.~Condition $\textrm{(D)}$ is assumed.
\bigskip\\
According to  (\ref{eq-eta-dec}) we denote by $\tau^{dp}$ and $\tau^{dq}$  the $\mathbb{H}$-accessible and the $\mathbb{H}$-totally inaccessible
component of $\tau$, respectively.~We stress that Proposition 3.1 applies to prove that $\tau^{dp}$ and $\tau^{dq}$ are also
the $\mathbb{G}$-accessible and the $\mathbb{G}$-totally inaccessible component of $\tau$,
respectively.~So, we will simply refer to $\tau^{dp}$ ($\tau^{dq}$) as to the accessible (totally inaccessible) component of $\tau$ and, as usual,
$(\tau^{dp}_h)_{h\in \mathbb{N}}$ will denote an enveloping sequence of $\tau^{dp}$.
\begin{proposition}\label{prop-repres1}
\begin{itemize}
\item[(i)] The
triplet $(M,H, [M,H])$ enjoys the $(P^*,\mathbb{G})$-PRP and is a $(P^*,\mathbb{G})$-basis of
martingales.
  \item[(ii)] $[M,H]$ admits the representation
\begin{equation}\label{eq: explicit covariation}
    [M,H]_t=\sum_{h\in \mathbb{N}}\Delta M_{\tau^{dp}_h}\left(\mathbb{ I}_{\{\tau^{dp}=\tau^{dp}_h\}}-
    P(\tau^{dp}=\tau^{dp}_h\mid\mathcal{H}_{{\tau^{dp}_h}^-})\right)\mathbb{ I}_{\{\tau^{dp}_h\le
    t\}}.
\end{equation}
\end{itemize}
\end{proposition}
\begin{proof}
The $(P^*,\mathbb{G})$-martingales $M$ and $H$ are independent and therefore strongly
 orthogonal
 so that point (i) follows by Theorem 4.5 in
\cite{caltor15}.\\
As far as point (ii) is concerned, by Proposition \ref{overlap} it
is enough to compute $[M^{dp},H^{dp}]$, where $H^{dp}$ denotes the accessible part of the Yoeurp decomposition of $H$ (see (\ref{Yoeurp-decomposition})).~Then representation (\ref{eq: explicit covariation}) follows immediately recalling representation (\ref{H-prime}) and equality (\ref{ref-rep}).
\end{proof}
Point (i) above joint with Proposition \ref{lemma:invariance} assures that
 a triplet of local martingales driving
the $(P,\mathbb{G})$-representation exists.~Nevertheless, in order to give an explicit description of its components, we need some preliminaries results.\bigskip\\
Let $A^{\tau,P, \mathbb{G}}$ be the $(P,\mathbb{G})$-compensator of $\tau$.~Then
$H^\prime=(H^\prime_t)_{t\in [0,T]}$ defined by
$$H^\prime_t:=\mathbb{ I}_{\{\tau\le t\}}-A^{\tau,P,\mathbb{G}}_t$$ is a $(P,\mathbb{G})$-martingale.~We may refer to $H^\prime$ as to the
\textit{$(P,\mathbb{G})$-compensated occurrence process of $\tau$}.
\begin{proposition}\label{prop-G-compensator}
\phantom{bla}
\begin{itemize}
\item[(i)] $H^\prime\in\mathcal{M}^2(P,\mathbb{G})$ and satisfies
\begin{equation*}
H^\prime_t=\mathbb{I}_{\{\tau\le t\}}-
A^{dq,P,\mathbb{G}}_t-\sum_{h\in \mathbb{N}}P(\tau^{dp}=\tau^{dp}_h\mid\mathcal{G}_{{{\tau^{dp}_h}^-}})\mathbb{ I}_{\{\tau^{dp}_h\le t\}},
\end{equation*}
where $A^{dq,P,\mathbb{G}}$ denotes the $(P,\mathbb{G})$-compensator of $\tau^{dq}$.
  \item[(ii)] $H^\prime$ is a special $(P^*,\mathbb{G})$-semi-martingale and
 $H$  is its martingale part.
\end{itemize}
\end{proposition}
\begin{proof}
Let $A^{dp,P,\mathbb{G}}$ denote the $(P,\mathbb{G})$-compensator of $\tau^{dp}$.~In order to show point (i) observe that Proposition \ref{prop-regularity} applies to prove that $H^\prime$ is
square-integrable and the same procedure used to get (\ref{eq-dec-comp-H}) yields
\begin{equation}\label{eq-comp1}A^{\tau,P,\mathbb{G}}=A^{dp,P,\mathbb{G}} + A^{dq,P,\mathbb{G}}.\end{equation}
The process $A^{dq,P,\mathbb{G}}$ is a continuous process, while
  by Lemma \ref{prop-explicit-compensator}  $A^{dp,P,\mathbb{G}}$ obeys the equality
\begin{equation}\label{eq-comp2}
A^{dp,P,\mathbb{G}}_t=\sum_{h\in \mathbb{N}}P(\tau^{dp}=\tau^{dp}_h\mid\mathcal{G}_{{\tau^{dp}_h}^-})\mathbb{
I}_{\{\tau^{dp}_h\le
  t\}}.
\end{equation}
Point (ii) follows immediately by the equality
 \begin{equation}\label{eq-mart-part}
H^\prime_t=H_t+A_t^{\tau,P,\mathbb{H}}-A_t^{\tau,P,\mathbb{G}}
\end{equation}
considering that by construction $H$ is a $(P^*,\mathbb{G})$-martingale under $P^*$, as well as under $P$,
and the process $A^{\tau,P,\mathbb{H}}-A^{\tau,P,\mathbb{G}}$
is $\mathbb{G}$-predictable and of finite variation.
\end{proof}
\begin{remark}\label{rem-tau e tau_dp}
Since $\mathbb{I}_{\{\tau^{dp}_h\le
  t\}}$ is $\mathcal{G}_{{\tau^{dp}_h}^-}$-measurable and $\{\tau^{dp}<+\infty\}\subset\{\tau=\tau^{dp}\}$,
   (\ref{eq-comp2}) can be rewritten as
\begin{equation*}
A^{dp,P,\mathbb{G}}_t=\sum_{h\in \mathbb{N}}P(\tau=\tau^{dp}_h\mid\mathcal{G}_{{\tau^{dp}_h}^-})\mathbb{
I}_{\{\tau^{dp}_h\le
  t\}}.
\end{equation*}
\end{remark}
According to
Definition \ref{def-min-mart-meas} we introduce a new
 condition.\vspace{3mm}\\
\textrm{(A2)} $P$ is the
minimal martingale measure for $H^\prime$ under $P^*$.\vspace{3mm}\\
Let us analyze condition \textrm{(A2)} and in particular its relationship with the \textit{immersion property} $\mathbb{F}\underset{P}{\hookrightarrow} \mathbb{G}$, that is the inclusion $\mathcal{M}_{loc}(P,\mathbb{F})\subset \mathcal{M}_{loc}(P,\mathbb{G})$ (see Section 5.9.1, page 315, in \cite{Jean-yor-chesney}).\bigskip\\
It is to stress that under \textrm{(A2)} the process $L$ defined by $L_t := \frac{dP}{dP^*}\Big|_{\mathcal{G}_t}$ satisfies
\begin{equation}\label{eq-L_rappr_diff}
  L_t=1-\int_0^t\gamma_sL_{s^-}dH_s
\end{equation}
where $\gamma$ is a suitable $\mathbb{G}$-predictable process (see
Remark 2 in \cite{schweizer92} or Theorem 9 in
\cite{ans_str92}).\\
\begin{proposition}\label{prop: condition A2)}
\phantom{bla}
\begin{itemize}
  \item [(i)] \textrm{(A2)} holds if and only if $M$ and $[M,H]$ are $(P,\mathbb{G})$-local martingales.
    \item [(ii)]  \textrm{(A2)} implies
$\mathbb{F}\underset{P}{\hookrightarrow} \mathbb{G}$.
      \item [(iii)] When $\tau$ is totally inaccessible, \textrm{(A2)} is equivalent to $\mathbb{F}\underset{P}{\hookrightarrow} \mathbb{G}$.
\end{itemize}
\end{proposition}
\begin{proof}
(i)
Assuming \textrm{(A2)} by point (i) of Proposition \ref{prop-repres1} it follows immediately that $M$ and $[M,H]$ are $(P,\mathbb{G})$-local martingales.~More precisely, it is easy to check that $M$ and $[M,H]$ belong to $\mathcal{M}^2(P,\mathbb{G})$.~Viceversa, let $V$ be any element of $\mathcal{M}^2_{loc}(P^*,\mathbb{G})$ which is orthogonal to the martingale part $H$ of the special $(P^*,\mathbb{G})$-martingale $H^\prime$.~We have to show that $V$ belongs to $\mathcal{M}(P,\mathbb{G})$.~This is enough since
$P|\mathcal{G}_0=P^*|\mathcal{G}_0$.~By part i) of Proposition \ref{prop-repres1} there exist two predictable processes $\alpha$ and $\beta$ such that
$$V_t=\int_0^t\alpha_s\;dM_s +\int_0^t\beta_s\;d[M,H]_s.$$
Since equation (\ref{eq-L_rappr_diff}) holds, the processes   $LM$ e $L[M,H]$ are $(P^*,\mathbb{G})$-local martingales and therefore also $[L,M]$ e $[L,[M,H]]$ are $(P^*,\mathbb{G})$-local martingales (see Lemma 15.2.1, page 373, in \cite{coh-ell-15}).~Moreover
 \begin{align*} L_t\,V_t=&\int_0^t\,L_{s^-}\,dV_s+\int_0^t\,V_{s^-}\,dL_s+[L,V]_t\cr=&\int_0^t\,L_{s^-}\,dV_s+\int_0^t\,V_{s^-}\,dL_s +\int_0^t\,\alpha_s\,d[L,M]_s+\int_0^t\,\beta_s\,d[L,[M,H]]_s,\end{align*}
 so that $LV$ is a $(P^*,\mathbb{G})$-local martingale and therefore $V$ is a $(P,\mathbb{G})$-local martingale.\bigskip\\
(ii) This point follows immediately considering that, since $M$ enjoys the $(P,\mathbb{F})$-PRP, any $(P,\mathbb{F})$-local martingale can be represented as integral w.r.t.~$M$, which by assumption is a $(P,\mathbb{G})$-martingale.\bigskip\\
(iii) By Proposition \ref{overlap}  $\tau$ totally
inaccessible implies that $[M,H]\equiv 0$ and $\mathbb{F}\underset{P}{\hookrightarrow} \mathbb{G}$ implies that $M$ is a $(P,\mathbb{G})$-local martingale, so that by point (i) \textrm{(A2)} holds.~The proof ends using point (ii).
\end{proof}
We state now our main result in the framework of progressive enlargement completed with all the assumptions.
\begin{theorem}\label{prop-repres2}
If conditions  \textrm{(D)}, \textrm{(A1)} and \textrm{(A2)} hold, then
the triplet $\left(M, H^\prime,[M,H]\right)$ enjoys the
$(P,\mathbb{G})$-PRP.
\end{theorem}
\begin{proof}
By point (i) of Proposition \ref{prop-repres1} and Proposition \ref{lemma:invariance} we derive the
$(P,\mathbb{G})$-PRP for the
 triplet $(\tilde{M}, \tilde{H}, \tilde{K})$ defined by
\begin{align}\label{eq-triplet-martingale}
\tilde{M}_t:=&M_t-\int_0^t\frac{1}{L_{s^-}}d\langle L, M\rangle^{P^*,\mathbb{G}}_s, \notag \\
    \tilde{H}_t:=&H_t-\int_0^t\frac{1}{L_{s^-}}d\langle L,
    H\rangle^{P^*,\mathbb{G}}_s,\\
    \tilde{K}_t:=&[M,H]_t-\int_0^t\frac{1}{L_{s^-}}d\langle L,
    [M,H]\rangle^{P^*,\mathbb{G}}_s,\notag
\end{align}
provided the sharp brackets $\langle L,
M\rangle^{P^*,\mathbb{G}}$, $\langle L, H\rangle^{P^*,\mathbb{G}}$
and $\langle L,
    [M,H]\rangle^{P^*,\mathbb{G}}$ exist.~Since equation (\ref{eq-L_rappr_diff}) implies that $LM$ and $L[M,H]$ are  $(P^*,\mathbb{G})$-local martingales then
    $$\langle L, M\rangle^{P^*,\mathbb{G}}=\langle L,
    [M,H]\rangle^{P^*,\mathbb{G}}\equiv 0$$ (see Lemma 15.2.1, page 373, in \cite{coh-ell-15}).~Moreover, equation (\ref{eq-mart-part}) implies that $H$ is a special
$(P,\mathbb{G})$-semi-martingale and therefore the existence of
$\langle L,
    H\rangle^{P^*,\mathbb{G}}$ follows
    by Lemma 2.1 and subsequent Remark 2.1 in \cite{caltor18}.\\
    Then by the equations (\ref{eq-triplet-martingale}) we derive immediately that $\tilde{M}=M$ and
$\tilde{K}=[M,H]$ and also that $\tilde{H}=H^\prime$ considering that
$$\tilde{H}_t-H^\prime_t=A^{\tau,P,\mathbb{G}}_t-A^{\tau,P,\mathbb{H}}_t-\int_0^t\frac{1}{L_{s^-}}d\langle
L,
    H\rangle^{P^*,\mathbb{G}}_s,$$
and therefore $\tilde{H}-H^\prime$ turns out to be null since it is a predictable $(P,\mathbb{G})$-martingale of finite variation (see Lemma 22.11, page 416,
in \cite {kall97}).
\end{proof}
\begin{corollary} \label{cor: tau totally in} Assume \textrm{(D)}, \textrm{(A1)} and \textrm{(A2)}.~If $\tau$ is totally
inaccessible, then the pair $(M,H^\prime)$ is a $(P,\mathbb{G})$-basis.
\end{corollary}
\begin{proof}
Proposition \ref{overlap} applies to prove that $[M,H]$ is identically zero.~Moreover, under the assumptions of Theorem \ref{prop-repres2}, the square-integrable $(P,\mathbb{G})$-martingale $H^\prime$ is strongly orthogonal to $M$.~In fact it is enough to observe that (\ref{eq-mart-part}) implies
\begin{align}\label{eq-equivalence-i-iii}
[M,H']=[M,H]+[M,A^{\tau,P,\mathbb{H}}-A^{\tau,P,\mathbb{G}}]
\end{align}
and that by Lemma 2.3 in \cite{yoeurp76} the process $[M,A^{\tau,P,\mathbb{H}}-A^{\tau,P,\mathbb{G}}]$ is a
$(P,\mathbb{G})$-local martingale.
\end{proof}
When $[M,H]$ is not identically zero, the triplet $\left(M, H^\prime,[M,H]\right)$  is
not a $(P,\mathbb{G})$-basis.~In fact the $(P,\mathbb{G})$-martingales $M$ and $[M,H]$ are not strongly orthogonal
 and the same happens for $H^\prime$ and $[M,H]$.~Let us briefly discuss this point.~\\
 $M$  and $[M,H]$  are
strongly orthogonal $(P,\mathbb{G})$-martingales if and only if
  $M[M,H]$ is a $(P,\mathbb{G})$-martingale and the latter holds  if and
only if $LM[M,H]$ is a $(P^*,\mathbb{G})$-local martingale.~Moreover
$$
L_tM_t=\int_0^tL_{s^-}dM_s+\int_0^tM_{s^-}\gamma_sL_{s^-}dH_s+\int_0^t\gamma_sL_{s^-}d[M,H]_s,
$$
so that
\begin{align*}
\big[LM,[M,H]\big]_t=&\int_0^tL_{s^-}d\big[M,[M,H]\big]_s+\int_0^tM_{s^-}\gamma_sL_{s^-}d\big[H,[M,H]\big]_s+\\
+&\int_0^t\gamma_sL_{s^-}d\big[[M,H],[M,H]\big]_s.
\end{align*}
The right hand side of previous equality is a $(P^*,\mathbb{G})$-local martingale if and only if $[M,H]$ is
identically zero.~Similarly we derive that $H^\prime$ and $[M,H]$ are not strongly orthogonal.
\begin{remark}\label{rem-immersion}
Theorem \ref{prop-repres2} generalizes Proposition 5.3 (ii) in \cite{ca-jean-za13}.~In that paper, in addition to \textrm{(A1)}, the authors assume
$\mathbb{F}\underset{P}{\hookrightarrow}
\mathbb{G}$ and \textit{Jacod's equivalence hypothesis for $\tau$} w.r.t.~a
non-atomic measure $\nu$  (see Condition (A) and Proposition 1.5 in \cite{jacod85}).~Last condition implies the \textit{density
hypothesis for $\tau$} (see e.g.~\cite{elka-jean-jiao09})  and as a consequence its total inaccessibility (see Remark \ref{caratterizzazione dei tempi aleatori}).~Corollary 2.8  in \cite{ame-be-schw03} assures condition \textrm{(D)}.~Then by point (iii) of Proposition \ref{prop: condition A2)} it follows that \textrm{(A2)} holds.~So, when the processes involved live on $[0,T]$, our assumptions are satisfied and, by Corollary \ref{cor: tau totally in}, $(M,H^\prime)$ is a $(P,\mathbb{G})$-basis.\\
For the sake of completeness we note that in general, under $\mathbb{F}\underset{P}{\hookrightarrow}
\mathbb{G}$, if
\begin{equation*}\int_0^\cdot \frac{dP[\tau\le u| \mathcal{F}_u]}{1-P[\tau < u|
\mathcal{F}_u]},\end{equation*} is a $\mathbb{F}$-predictable process, then its value at $t\wedge \tau$ coincides $P$-a.s.~with
$A^{\tau,P,\mathbb{G}}_t$ (see Proposition 6.3 and previous comment in \cite{jean-rut-99}).~This happens for example under the density
hypothesis for $\tau$, so that
\begin{equation*}
A^{\tau,P,\mathbb{G}}_t=\int_0^{\tau\wedge t}\lambda_u\,\nu(du),
\end{equation*}
 where the \textit{intensity} $\lambda$ is defined by
\begin{equation*}\label{intensity}\lambda_t:=\frac{p_t(t)}{P(\tau>t\mid\mathcal{F}_t)},\end{equation*}
with $\left(p_t(u)\right)_{u\in [0,T]}$ the $\mathbb{F}$-\textit{conditional density of $\tau$}, that is
$$P(\tau>t\mid\mathcal{F}_t)=\int_t^{+\infty}p_t(u)\,\nu(du)$$
(see formulas (3) and (5) in \cite{ca-jean-za13}).~So equality (\ref{H-prime}) reduces to
\begin{equation*}\label{the basis}H^\prime_t=\mathbb{I}_{\{\tau\leq t\}}-\int_0^{\tau\wedge t}\lambda_u\,\nu(du).\end{equation*}
\end{remark}
\begin{remark}\label{rem-no decoupling}
It may be worthwhile to observe that, dropping condition $\textrm{(D)}$, which is a standing hypothesis in this paper, and denoting by
 $\tau^{dp}$ and $\tau^{dq}$  the $\mathbb{G}$-accessible and the $\mathbb{G}$-totally inaccessible
component of $\tau$, respectively (possibly different from the $\mathbb{F}$-accessible and the $\mathbb{F}$-totally inaccessible
component of $\tau$), the expression of the $\mathbb{G}$-compensator of $\tau$ given by (\ref{eq-comp1}) and
(\ref{eq-comp2}) still holds.\\In this more general framework if,
for any $h$, $\tau^{dp}_h$, is an
$\mathbb{F}$-stopping time then $\mathcal{F}_{{\tau^{dp}_h}^-}$ is well-defined and, as it can be easily proved,
\begin{equation*}\mathcal{G}_{{\tau^{dp}_h}^-}\cap\{\tau=\tau^{dp}_h\}=\mathcal{F}_{{\tau^{dp}_h}^-}\cap\{\tau=\tau^{dp}_h\}\end{equation*}
 and
\begin{equation*}
P(\tau=\tau^{dp}_h\mid\mathcal{G}_{{\tau^{dp}_h}^-})\mathbb{
I}_{\{\tau^{dp}_h\le
  t\}}=P(\tau=\tau^{dp}_h\mid\mathcal{F}_{{\tau^{dp}_h}^-})\mathbb{
I}_{\{\tau^{dp}_h\le
  t\}}.\end{equation*}
 As a consequence, if moreover  $\tau^{dq}$ satisfies the Jacod's equivalence hypothesis,
  then the $\mathbb{F}$-conditional law of $\tau$, besides an absolutely continuous part which has a density,
   contains a discontinuous part jumping on $\mathbb{F}$-predictable stopping times.~In particular, when
   the enveloping sequence of $\tau^{dp}$ is finite, $\tau$  is under
   the \textit{generalized density hypothesis}
 introduced  in \cite{jiao-li15}.
\end{remark}
\subsection{Enlargement by a general filtration}
We now extend Theorem \ref{prop-repres2} to the more general setting of Section 3 and 4, that is to the case where $\mathbb{H}$
is the reference filtration of
a square integrable martingale $N$, which enjoys the $\mathbb{H}$-PRP.~We recall that conditions \textrm{(D)} and \textrm{(A1)} are in force and the
decomposition (\ref{Yoeurp-decomposition}) holds.~We indicate with
  $\{\tau^{dp}_l\}_{l\in \mathbb{N}}$ the jump
times of $N^{dp}$ and, for each $l\in \mathbb{N}$,
with $\{\tau_{l,h}\}_{h\in\mathbb{ N}}$ an enveloping sequence of $\tau^{dp}_l$.~As before $L=(L_t)_{t\in [0,T]}$ denotes
the derivative process  $dP/dP^*\big|_{\mathcal{G}_t}, t\in [0,T]$.
\begin{lemma}\label{introduction of N prime} Let $L$ be an element of $\mathcal{M}^2_{loc}(P^*,\mathbb{G})$.~Then
\begin{itemize}
\item [(i)] $N^{c}, N^{dq}$ and $N^{dp}$ are special $(P,\mathbb{G})$-semi-martingales;
\item [(ii)] if $N^{dp}$ is of integrable variation under $P$, then the local martingale part $N^\prime$ of the
special $(P,\mathbb{G})$-semi-martingale $N$ satisfies
\begin{equation}\label{def-N prime}
N^\prime=N-A^{N^{c},P, \mathbb{G}}-A^{N^{dq},P, \mathbb{G}}-\sum_{l,h\in \mathbb{N}}\,E^P\left[\Delta N_{\tau_{l,h}}\;
\mathbb{I}_{\tau^{dp}_l=\tau_{l,h}}
\mid\mathcal{G}_{{\tau_{l,h}}^-}\right]\mathbb{I}_{\tau_{l,h}\le
t},\end{equation}
where $A^{N^{c},P, \mathbb{G}}$ and $A^{N^{dq},P, \mathbb{G}}$ are the predictable finite variation terms of
the special $(P,\mathbb{G})$-semi-martingales $N^{c}$ and $N^{dq}$, respectively.
\end{itemize}
\end{lemma}
\begin{proof}
(i) $N^{c}, N^{dq}$ and $N^{dp}$ are $(P^*,\mathbb{G})$-square-integrable martingales so that the hypothesis on $L$ assures that $\langle N^c,L\rangle^{P^*,\mathbb{G}}$, $\langle N^{dq},L\rangle^{P^*,\mathbb{G}}$ and $\langle N^{dp},L\rangle^{P^*,\mathbb{G}}$ exist (see VII.39 and subsequent discussion, page 227, in \cite{del-me-b}).~The statement follows by Remark 2.1 in \cite{caltor18}.\\
(ii)  $N$ is a special $(P,\mathbb{G})$-semi-martingale as sum of special $(P,\mathbb{G})$-semi-martingales.~Moreover, by the uniqueness of
the canonical decomposition the $(P,\mathbb{G})$-predictable finite variation part of $N$ coincides with the sum of the predictable
finite variation parts of $N^{c}, N^{dq}$ and $N^{dp}$.~In particular, since $N^{dp}$ is of integrable variation under $P$, then last
term  can be expressed applying Lemma \ref{prop-explicit-compensator}, so that equality (\ref{def-N prime}) is proved.
\end{proof}
Hypothesis \textrm{(A2)} in this more general framework reads as follows.
 \vspace{3mm}\\
$\mathrm{(A2_N)}$ $P$ is the minimal martingale measure for $N^\prime$ under $P^*$.\vspace{3mm}\\
One  immediately gets the generalization of Proposition \ref{prop: condition A2)}.
\begin{proposition}\label{prop: condition A2_N)}
\phantom{bla}
\begin{itemize}
  \item [(i)] $\mathrm{(A2_N)}$ holds if and only if $M$ and $[M,N]$ are $(P,\mathbb{G})$-local martingales.
    \item [(ii)]  $\mathrm{(A2_N)}$ implies
$\mathbb{F}\underset{P}{\hookrightarrow} \mathbb{G}$.
      \item [(iii)] When $[M,N]\equiv 0$, $\mathrm{(A2_N)}$ is equivalent to $\mathbb{F}\underset{P}{\hookrightarrow} \mathbb{G}$.
\end{itemize}
\end{proposition}
Let us now present the main theorem.
\begin{theorem}\label{prop-repres3}
Assume that \textrm{(D)}, \textrm{(A1)} and $\mathrm{(A2_N)}$ hold, $L$ belongs to $\mathcal{M}^2_{loc}(P^*,\mathbb{G})$ and the accessible martingale part of $N$, $N^{dp}$, is of integrable variation.~Then
the triplet $(M,N^\prime,[M,N])$ enjoys the $(P,\mathbb{G})$-PRP, where $N^\prime$ can be represented as in (\ref{def-N prime}).
\end{theorem}
\begin{proof}
$(M,N,[M,N])$ enjoys the $(P^*,\mathbb{G})$-PRP, that is the analogous of point (i) of Proposition \ref{prop-repres1} holds.~The assumption $\mathrm{(A2_N)}$ implies that $\langle L,
M\rangle^{P^*,\mathbb{G}}=\langle L, N\rangle^{P^*,\mathbb{G}}\equiv 0$ (see point i) of Proposition \ref{prop: condition A2_N)})
and $\langle L,
    [M,N]\rangle^{P^*,\mathbb{G}}$ exist by the regularity hypothesis on $L$.~Then Proposition \ref{lemma:invariance} can be applied, so that, following the same steps of the proof of Theorem \ref{prop-repres2}, we derive the thesis.
\end{proof}
Under the same assumptions of previous theorem following corollary holds.
\begin{corollary}
If $[M,N]\equiv 0$, then  the pair $(M,N^\prime)$ is a $(P,\mathbb{G})$-basis.
\end{corollary}
\begin{proof}
By previous theorem $(M,N^\prime)$ enjoys the $(P, \mathbb{G})$-PRP.~By
Proposition \ref{prop-regularity}  the local martingale part $N^\prime$ of the special $(P, \mathbb{G})$-semi-martingale
$N$ is square-integrable.~Moreover following the same argument of previous section (see (\ref{eq-equivalence-i-iii})) we derive  $M$ and $N^\prime$ to be orthogonal.
\end{proof}
\begin{example}
An example of martingale representation on progressive enlargement by an accessible
random time is studied in \cite{caltor-proc19}.~Here we propose a slight generalization
in order to get an example
of martingales representation on a filtration enlarged through a full process.\\
On $(\Omega,\mathcal{F},P)$ let $W$ be a standard Brownian motion and $H^\eta$ the compensated occurrence
process of a random time $\eta$ with values in the set
$\{1,2,3\}$.~Assume $W$
and $\eta$ independent.~Applying point i) of Proposition \ref{lemma: M+N basis} to $W$ and $H^\eta$ one gets condition \textrm{(A1)}
for $M$ and $\mathbb{F}$ defined by
$$M:=W+H^\eta,\;\;\;\;\mathbb{F}:=\mathbb{F}^W\vee\sigma(\eta\wedge \cdot),$$
where $\mathbb{F}^W$ denotes the natural filtration of $W$.\\
On the same probability space let $H$ be the compensated occurrence
process of a random time $\tau$ taking values in
the set $\{2,4\}$ and let  $\Pi$ be a compensated Poisson martingale independent of $(W,\eta,\tau)$ with natural filtration $\mathbb{F}^\Pi$.~Set
$$N:=H+\Pi,\;\;\;\;\mathbb{H}:=\sigma(\tau\wedge \cdot)\vee\mathbb{F}^\Pi.$$
 $N$ is a $(P,\mathbb{H})$-martingale and by point (i) of Proposition \ref{lemma: M+N basis} $N$ enjoys the $(P,\mathbb{H})$-PRP.\\
If $W$ is independent of $(\eta, \tau)$ and the
 joint law
$p_{\eta, \tau}$ of $(\eta, \tau)$ is strictly positive on the set
$\{1,2,3\}\times\{2,4\}$, then condition \textrm{(D)} holds and the martingale
preserving measure $P^*$ is defined on
$(\Omega,\mathcal{G}_T)$, up to a standard extension procedure, by
the rule $$P^*(A\cap C\cap D\cap E):=P(A)P(C)P(D),$$ for any $A\in\mathcal{F}_T^W, C\in\sigma(\eta\wedge T), D\in\sigma(\tau\wedge T), E\in\mathcal{F}^\Pi_T$.~In this case $L$ is a bounded process.
\\$[M,\Pi]\equiv 0$ so that $[M,N]\equiv [M,H]$.~Therefore
condition $\mathrm{(A2_N)}$ is in force as soon as $[M,H]$ is a  $(P,\mathbb{G})$-martingale.~It can be easily derived that this happens if
\begin{equation}\label{computations}
P\left(\tau=2\mid\eta= 2\right)=P(\tau=2\mid\eta\neq
1)=P(\tau=2\mid\eta=3).
\end{equation}
So, since $\Pi$ is a $(P,\mathbb{G})$-martingale then $N^\prime$ coincides with $H^\prime + \Pi$ and
the above theorem states the $(P,\mathbb{G})$-driving martingale is
the triplet
$\Big(M,H^\prime + \Pi, [M,H]\Big)$.~We refer to \cite{caltor-proc19} for the computation of the conditions (\ref{computations})
 and for the explicit expression of $H^\prime$ and $[M,H]$.
 \end{example}
\section{Conclusions and perspectives}
Our paper has been mainly inspired by the literature of credit risk theory, where the problem of martingale representation on a market with full information has been largely studied.~Actually in real markets the default time may be predictable, may overlap the jump of the asset price process or even the insider may be aware of more than a single default.~That is why, working in a purely theoretical framework, we have assumed that the filtration enlargement is induced by a not necessarily quasi-left continuous random process, possibly with jump times common to the processes adapted to the basic filtration and more general that the occurrence process of a random time.~As a byproduct we have investigated
the interplay between two notions specific of mathematical finance: the \textit{minimal martingale measure} and the \textit{immersion property}.
\bigskip\\
A natural development of the current result, which is object of ongoing research, is to get rid of the decoupling assumption \textrm{(D)} by moving on
the study of martingale representation on enlarged filtrations
in the light of the recent paper
by Aksamit, Choully and Jeanblanc on \textit{thin-thick decomposition of a random time} (see \cite{ak-chou-jean-18}).
\section*{Funding}
\noindent Partially supported by  the MIUR Excellence Department Project MatMod@TOV awarded to the Department of Mathematics, University of Rome Tor Vergata, CUP E83C23000330006.

\bibliographystyle{acm}
\bibliography{biblio}
\end{document}